\theoremstyle{plain}
\newtheorem{theorem}{Theorem}
\newtheorem{prop}{Proposition}
\newtheorem{lemma}{Lemma}
\newtheorem{coro}{Corollary}
\theoremstyle{definition}
\newtheorem{remark}{Remark}
\newcommand{\dd}{\,\mathrm{d}}
\newcommand{\ii}{\ts\mathrm{i}}
\newcommand{\ee}{\,\mathrm{e}}
\newcommand{\ts}{\hspace{0.5pt}}
\DeclareMathOperator*{\bigdotcup}{\dot{\bigcup}}
\DeclareMathOperator{\dotcup}{\dot{\cup}}
\newcommand{\vL}{\varLambda}
\newcommand{\ZZ}{\mathbb{Z}}
\newcommand{\RR}{\mathbb{R}\ts}
\newcommand{\CC}{\mathbb{C}\ts}
\newcommand{\NN}{\mathbb{N}}
\newcommand{\TT}{\mathbb{T}}
\newcommand{\XX}{\mathbb{X}}
\newcommand{\YY}{\mathbb{Y}}
\newcommand{\bs}[1]{\boldsymbol{#1}}
\begin{document}

\title[Squirals and beyond]{Squirals and beyond:\ Substitution tilings
 \\[2mm] with singular continuous spectrum}

\author{Michael Baake}
\address{Fakult\"{a}t f\"{u}r Mathematik, Universit\"{a}t Bielefeld,\newline
\hspace*{\parindent}Postfach 100131, 33501 Bielefeld, Germany}
\email{mbaake@math.uni-bielefeld.de}

\author{Uwe Grimm}
\address{Department of Mathematics and Statistics,
The Open University,\newline 
\hspace*{\parindent}Walton Hall, Milton Keynes MK7 6AA, United Kingdom}
\email{u.g.grimm@open.ac.uk}

\begin{abstract}
  The squiral inflation rule is equivalent to a bijective block
  substitution rule and leads to an interesting lattice dynamical
  system under the action of $\ZZ^{2}$.  In particular, its balanced
  version has purely singular continuous diffraction. The dynamical
  spectrum is of mixed type, with pure point and singular continuous
  components. We present a constructive proof that admits a
  generalisation to bijective block substitutions of trivial height
  on $\ZZ^{d}$.
\end{abstract}

\maketitle
\thispagestyle{empty}

\section{Introduction}

Dynamical systems of translation bounded measures on $\RR^{d}$ with
pure point spectrum are well understood by now. This owes a lot to the
equivalence between pure point dynamical and diffraction spectra
\cite{LMS,BL}, and to the general characterisation of model sets via
dynamical systems; see \cite{BLM} and references therein. From an
applied perspective, pure point spectra are linked to the analysis of
periodic and almost periodic systems, such as crystals and
quasicrystals, by standard crystallographic methods; see \cite{Cowley}
for background, and \cite{BGrev} for a recent review in the aperiodic
setting.  In fact, the understanding of pure point systems has
improved sufficiently that one can begin to attack the corresponding
inverse problem systematically \cite{LenMoo}.

As soon as one enters the realm of mixed spectra, the picture is less
transparent. While some partial understanding exists for systems with
absolutely continuous spectral components (for instance through the
connection with the highly developed field of stochastic processes;
see \cite{BBM} and references therein), one is pretty much in the dark
when it comes to systems with singular continuous spectra.  The
classic Thue-Morse system, see \cite{AS} for background, and its
generalisations in the spirit of \cite{Keane,BGG} are notable exceptions,
which are all one-dimensional.

Beyond the theoretical interest, there was little motivation in the
past to dive further into systems with continuous spectral components,
at least not from an applied point of view. However, with the modern
measurements possible in materials science, spectra of this type are
detectable and observed more frequently \cite{Withers}, so that some
further analysis is needed. One obstacle has been that practically no
examples in higher dimensions are known, and certainly not in an
explicit fashion. This, in turn, is where the classic Thue-Morse
system excels:\ Since its original (spectral) discussion in
\cite{Wie,Mah} and its later reformulation in \cite{Kaku}, it has been
the paradigm of singular continuous (dynamical) spectrum, and it is
actually rather easy to also calculate its distribution function
explicitly and with high precision; see \cite{BG08} and references
therein for details and \cite{Zaks} for an approach to the analysis of
its `fractal' aspects. The good accessibility of the latter is a
consequence of the underlying Riesz product structure; see \cite{Zyg}
for background material.

However, the situation is not as bad as it appears. A substantial step
forward was achieved in \cite{LM,LMS,LMS2} for lattice substitution
systems by identifying modular coincidence as one viable and powerful
generalisation of Dekking's criterion \cite{Dekking} for pure
pointedness in one-dimensional substitutions of constant length. The
interesting cases are the systems that \emph{fail} to possess a
modular coincidence, such as the bijective substitutions of constant
length and their higher-dimensional generalisations studied in
\cite{Nat1,Nat2}. However, to the best of our knowledge, no serious
attempt has been made to investigate a genuinely higher-dimensional
example with singular continuous spectrum explicitly (by which we mean
an example that cannot be written as a product of one-dimensional
systems, such as those appearing in \cite{HB}).

Here, we report on a planar example that was identified as a good
candidate in \cite{Dirk}, and could be tackled with the methods
explained in \cite{FS}. It it known as the \emph{squiral tiling} and
appears in \cite[Fig.~10.1.4]{GS}, where it was constructed as a
simple example for a tiling of the plane by one prototile with
infinitely many edges (and its mirror image). It is obtained by means
of the simple inflation rule
\begin{equation}\label{eq:sqinfl}
   \raisebox{-8ex}{\includegraphics[width=0.4\textwidth]{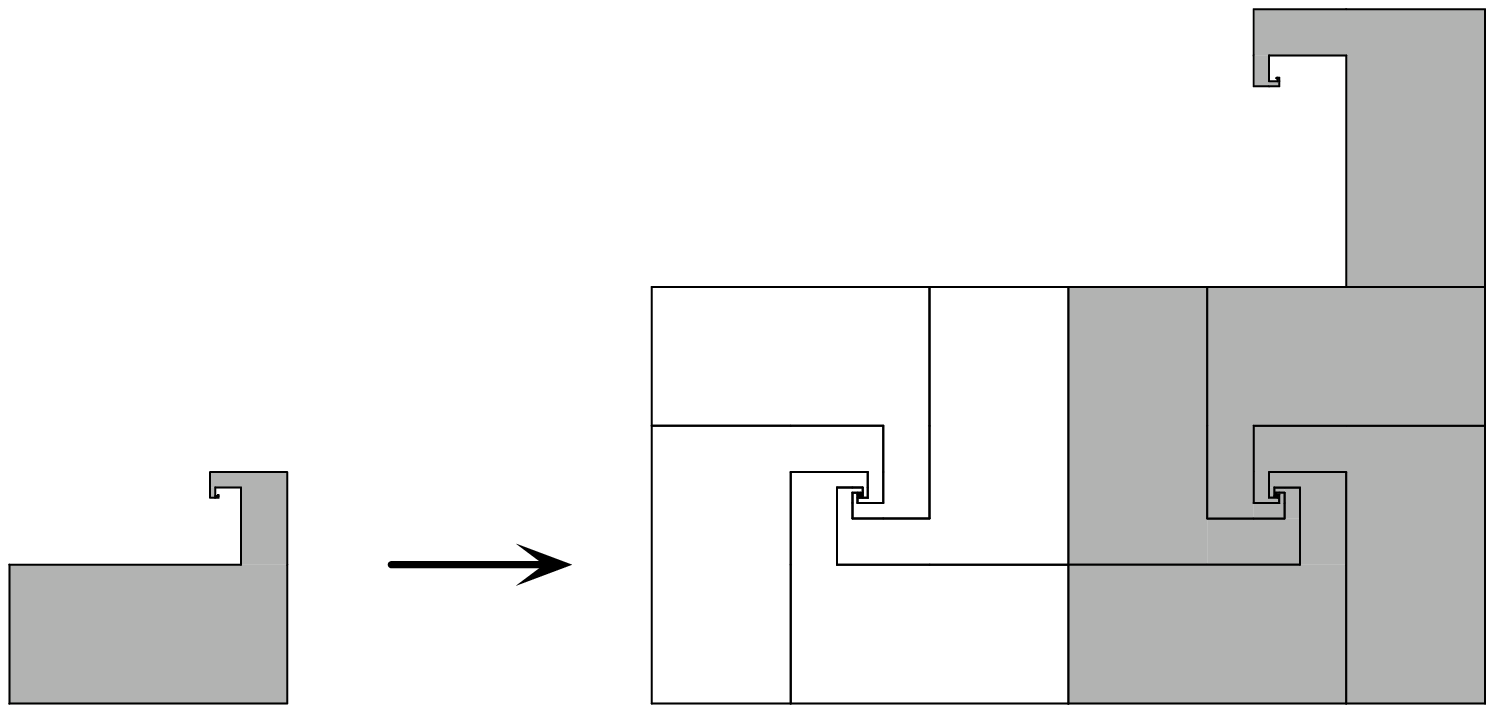}}
\end{equation}
together with the corresponding rule for the opposite chirality of
the squiral prototile. It has the integer inflation multiplier $3$
and defines an aperiodic tiling of the Euclidean plane. The term
`aperiodicity' is used in its strong version here, hence meaning
that the inflation rule defines a unique hull (via the closure
of the $\ZZ^{2}$-orbit of a fixed point in the local topology)
with the property that no element of this hull admits a non-trivial
period. A larger patch of the tiling is shown in Figure~\ref{fig:sqtil}.

\begin{figure}
\begin{center}
  \includegraphics[width=0.9\textwidth]{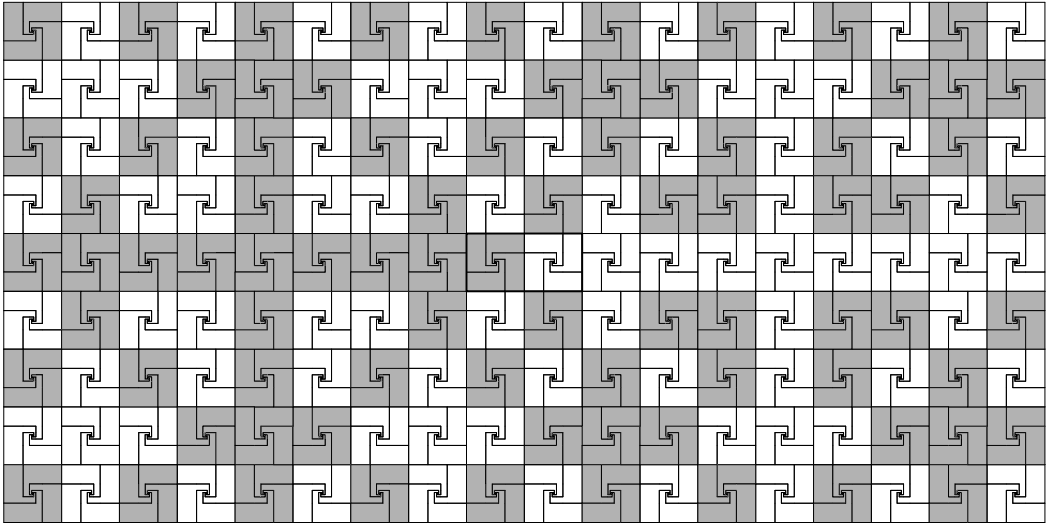}
\end{center}
\caption{\label{fig:sqtil} A rectangular patch of the squiral tiling
  with a `black and white' reflection symmetry in the vertical
  axis. The displayed patch was obtained via two inflation steps of
  \eqref{eq:sqinfl} from the central rectangular seed (marked), the
  latter comprising four tiles of each chirality. }
\end{figure}

As an application of \cite{FS}, Frettl\"{o}h and Sing checked that
this tiling has no modular coincidence, so cannot be a model set,
wherefore this is a candidate for a mixed spectrum. In what follows,
we reformulate this tiling via a topologically conjugate one (by
mutual local derivability \cite{BSJ,B02}) and bring it to the simpler
setting of block or lattice substitutions.  It will then be possible
to show that its balanced version (with weights $1$ and $\bar{1}=-1$
of equal frequency) has purely singular continuous diffraction
spectrum. As a consequence, the dynamical spectrum will be a mixture
of a pure point and a singular continuous part, in line with the
(implicit) conjecture in \cite{Nat1}.

Our approach (as indicated already) will be constructive, so that we
do not only determine the spectral type, but also derive the
diffraction measure explicitly. In fact, we can identify it as a
two-dimensional Riesz product. This means that one can formulate a
simple recursion for a sequence of 2D continuous distribution
functions that converge towards the distribution function of the
squiral measure, and uniformly so (even though the underlying measures
are absolutely continuous, and can thus only converge to the squiral
measure in the vague topology).

In Section~\ref{sec:gen}, this approach is generalised to primitive
and bijective block substitutions of constant length over a binary
alphabet, in arbitrary dimension $d$. We prove that these systems show
singular continuous diffraction if the height lattice is trivial.

\section{Squirals and lattice inflations} 

The squiral inflation of Eq.~\eqref{eq:sqinfl} is primitive, with
inflation matrix $M=\left(\begin{smallmatrix} 5 & 4 \\ 4 &
    5 \end{smallmatrix}\right)$.  The latter has eigenvalues $9=3^2$
and $1$, with Perron-Frobenius eigenvector $(1,1)$, which is in line
with both chiralities occupying the same area and occurring with equal
frequency (via reading it as left or as right eigenvector). Moreover,
if we add one pseudo-vertex at the middle of the long edge of the
squiral, the inflation tiling is face to face as well.  As such, it
defines a strictly ergodic dynamical system under the shift action of
$\ZZ^{2}$. Note that the special (spiralling) vertex always falls on
the centre of a square that is formed by $4$ squirals of the same
chirality. This also explains the name as a mixture of `square' and
`spiral'.  It is natural to view the squiral tilings as a (local)
decoration of the square lattice, where the colour simply codes the
chirality of the square decoration. This correspondence gives rise to
a mutually local rule in the sense of \cite{BSJ,B02} as follows.

The first step consists of identifying an induced inflation rule on
the coloured square lattice, which is simply given by
\begin{equation}\label{eq:blocksub}
  \raisebox{-6ex}{\includegraphics[width = 0.6 \textwidth]{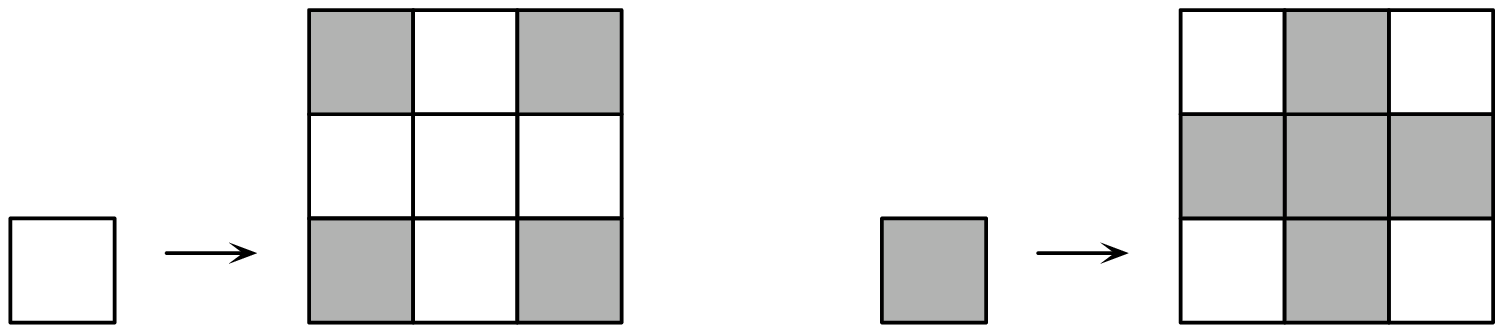}}
\end{equation}
This defines a block substitution that is bijective and of constant
length in the terminology of \cite{Nat1}. As a coloured tiling, it is
thus defined by a primitive inflation rule (with the same inflation
matrix $M$ as before). One quickly checks that the inflation action
and the derivation rule (now applicable in either direction) commute,
so that the block substitution defines a hull that is mutually locally
derivable (MLD) to the squiral hull.  It is clear that the former is
better suited for our further analysis.

\begin{figure}
\begin{center}
  \includegraphics[width=0.9\textwidth]{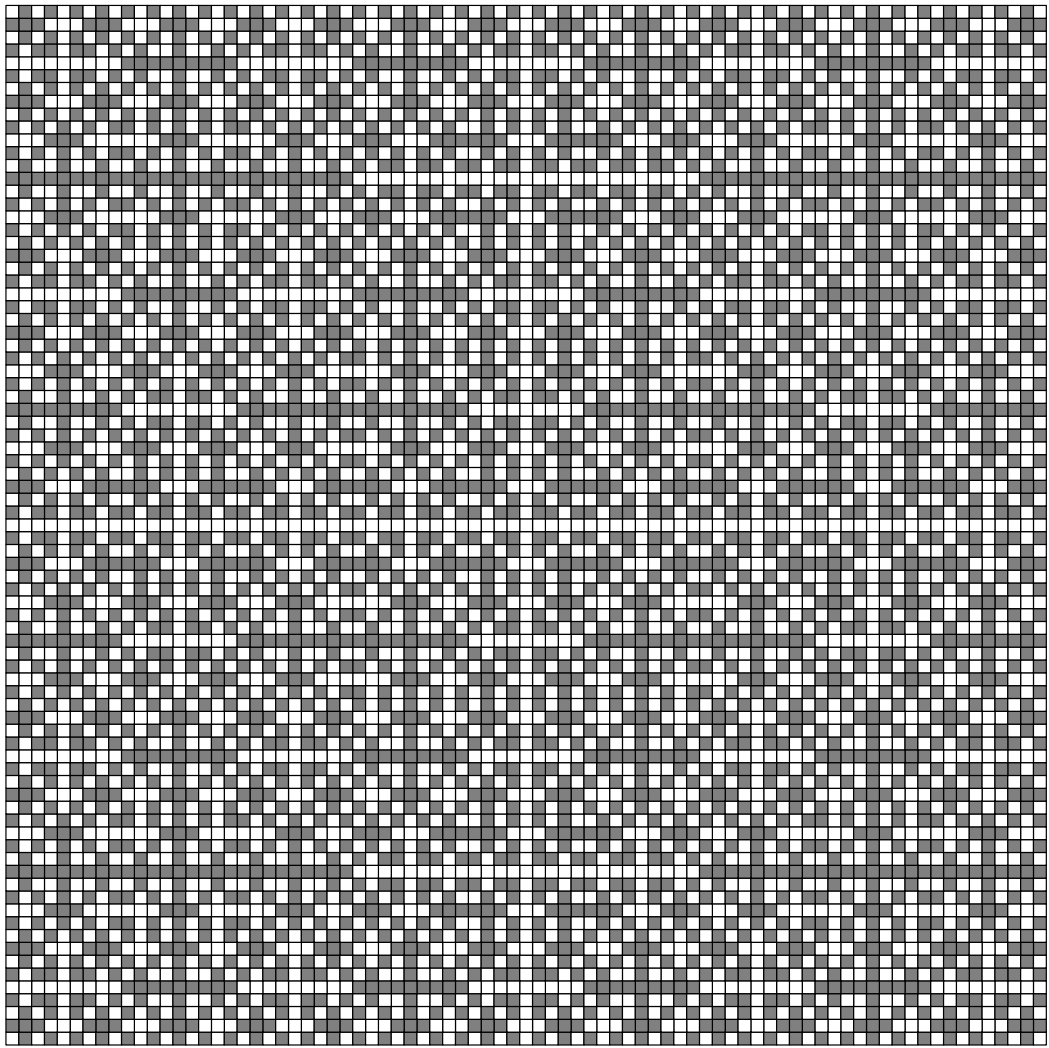}
\end{center}
\caption{\label{fig:blocktil} A square-shaped patch of the squiral
 tiling with full $D_{4}$-symmetry, as obtained from the equivalent
 block substitution \eqref{eq:blocksub}.}
\end{figure}

If one iterates the block substitution starting from a single square
with its centre as reference point, one obtains a sequence of growing
(finite) tilings that converges to a tiling of the plane which is a
fixed point; see Figure~\ref{fig:blocktil} for an illustration. This
particular tiling has perfect $D_{4}$-symmetry and defines (via its
hull, the closure of its $\ZZ^{2}$-orbit in the local topology) a
minimal dynamical system that is uniquely ergodic. This follows from
standard arguments because the inflation is primitive and leads to a
face to face tiling of finite local complexity.  As a consequence, the
fixed point is linearly repetitive, which implies the claim by
\cite[Thm.~6.1]{LP}. For some general results on tiling dynamical
systems, we refer to \cite{Rob,Sol,Nat1}.

\section{Autocorrelation coefficients}

Let us now represent each (coloured) square by a point at its lower
left corner that carries a matching colour, which we call $1$ (for
white) or $\bar{1}$ (for grey). The hull $\XX$ is then a closed
subspace of $\{ 1,\bar{1}\}^{\ZZ^{2}}$ that is invariant under the
shift action of $\ZZ^{2}$ by construction. Given $w\in\XX$, we define
the corresponding Dirac comb 
\begin{equation}\label{eq:def-comb}
  \omega \, = \, w \ts \delta^{}_{\ZZ^{2}} \, = 
  \sum_{z\in\ZZ^{2}} w^{}_{z} \ts \delta^{}_{z} \ts ,
\end{equation} 
where we write $z\in\ZZ^{2}$ as a pair of integers from now on. For
the weights, we use the convention $\bar{1}=-1$.  Following the
approach pioneered by Hof \cite{Hof}, we define the natural
autocorrelation measure $\gamma$ of $\omega$ as
\begin{equation}\label{eq:gam-def}
   \gamma \, = \, \omega \circledast \widetilde{\omega} \, := \,
   \lim_{N\to\infty} \frac{\omega\big|^{}_{C_{N}} \! * 
   \, \widetilde{\omega\big|^{}_{C_{N}}}}{(2N+1)^{2}}     
\end{equation}
where $C_{N}$ denotes the closed centred square of side length $2N$.
Here, $\widetilde{\mu}$ denotes the measure defined by
$\widetilde{\mu} (g) = \overline{\mu (\widetilde{g})}$ for $g\in
C_{\mathsf{c}} (\RR^{2})$, with $\widetilde{g} (x) :=
\overline{g(x)}$.  The limit in Eq.~\eqref{eq:gam-def} always exists
due to unique ergodicity, and is independent of the choice of $w \in
\XX$. The measure $\gamma$ is of the form $\gamma = \eta\ts\ts
\delta^{}_{\ZZ^{2}}$ with coefficients
\[
   \eta\ts (m,n) \, = \, \lim_{N\to\infty} \frac{1}{(2N+1)^{2}}
   \sum_{k,\ell=-N}^{N} w^{}_{k,\ell}\, w^{}_{k-m,\ell-n} \ts ,
\]
where the result is again independent of the choice of $w$. We
discuss the convergence of these sums in more detail later.

Due to the symmetry of our system, it suffices to consider the
positive quadrant and formulate the autocorrelation accordingly. This
simplifies our further calculations. To this end, we consider the
square inflation with the lower left corner as new reference point,
and observe that this, when starting from a single block, leads an
iteration sequence which fills the positive quadrant only.  To obtain
a complete tiling, one may start from the legal seed
$\begin{smallmatrix} \bar{1} & 1 \\ 1 & \bar{1} \end{smallmatrix}$
with reference point in its centre.  Recall that a patch is called
\emph{legal} when it occurs in the $n$-fold substitution of a single
letter, for some $n\in\NN$.  The iteration of the substitution then
converges towards a $2$-cycle that covers $\ZZ^{2}$, each element of
which defines the same hull. The substitution is still a $2$-cycle
when restricted to the positive quadrant. Due to the $D_{4}$-symmetry
of the pattern in Figure~\ref{fig:blocktil}, and hence that of the
entire hull $\XX$, it is sufficient to calculate the autocorrelation
coefficients within the positive quadrant.

Denote the two configurations in the positive quadrant (written in the
alphabet $\{1,\bar{1}\}$, with $\bar{\bar{1}}=1$) by $v$ and $\varrho
v$. They satisfy
\begin{equation}\label{eq:fix}
   (\varrho v)^{}_{3m+r,3n+s} \, = \, \begin{cases}
   \overline{v}^{}_{m,n}  , & 
       \text{if } r\equiv s\equiv 0 \bmod 2 , \\
   v^{}_{m,n} , & \text{otherwise},  \end{cases}
\end{equation}
where $m,n \ge 0$ and $0 \le r,s \le 2$.  The autocorrelation
coefficients clearly satisfy
\begin{equation}\label{eq:eta-def}
  \eta\ts (m,n) \, =  \lim_{N\to\infty} \frac{1}{N^{2}}
  \sum_{k,\ell =0}^{N-1} (\varrho v)^{}_{k,\ell} \,
  (\varrho v)^{}_{k+m,\ell+n} \, = 
  \lim_{N\to\infty} \frac{1}{N^{2}}
  \sum_{k,\ell =0}^{N-1} v^{}_{k,\ell} \, v^{}_{k+m,\ell+n} 
\end{equation}
for $m,n \ge 0$. All limits exist due to unique ergodicity and the
fact that the sum is an orbit average of a continuous function
\cite{W}. Moreover, we have used the symmetry to write $\eta$ via the
positive quadrant only. Clearly, we have $\eta\ts (0,0) = 1$ together
with
\begin{equation}\label{eq:eta-sym}
   \eta\ts (-m,n) \, = \, \eta\ts (m,-n) \, = \,
   \eta\ts (-m,-n) \, = \, \eta\ts (m,n) \ts ,
\end{equation}
which specifies the function $\eta$ on all of $\ZZ^{2}$.
Note that the $D_{4}$-symmetry of our system also
implies the relation $\eta\ts (m,n) = \eta\ts (n,m)$.

For our further analysis, we introduce the shorthand
\[
     \langle m,n\rangle  \, := \, \eta\ts (m,n)\ts .
\]
Considering the coefficient $\langle 3m \! + \! r,3n \! + \! s\rangle
$ for $\varrho v$ and fixed $m,n\ge 0$ and $0\le r,s \le 2$, one can
split the defining sum modulo $3$ and use Eqs.~\eqref{eq:fix} and
\eqref{eq:eta-def} to derive the recursion relations
\begin{equation}\label{eq:eta-rec}
  \begin{split}
  \langle 3m,3n\rangle  \,   & = \, \langle m,n\rangle  \\
  \langle 3m,3n \! + \!1\rangle  \, & = \, 
   - \tfrac{2}{9} \langle m,n\rangle  + 
             \tfrac{1}{3} \langle m,n \! + \! 1\rangle  \\
  \langle 3m,3n \! + \! 2\rangle  \, & = \,   
   \tfrac{1}{3} \langle m,n\rangle  - 
             \tfrac{2}{9} \langle m,n \! + \! 1\rangle  \\[1mm]
  \langle 3m \! + \! 1,3n\rangle  \,   & = \, 
   - \tfrac{2}{9} \langle m,n\rangle  + 
             \tfrac{1}{3} \langle m \! + \! 1,n\rangle  \\
  \langle 3m \! + \! 1,3n \! + \! 1\rangle  \, & = \, - \tfrac{2}{9} 
        \bigl( \langle m \! + \! 1,n\rangle  + 
               \langle m,n \! + \! 1\rangle \bigr)
               + \tfrac{1}{9} \langle m \! + \! 1,n \! + \! 1\rangle  \\
  \langle 3m \! + \! 1,3n \! + \! 2\rangle  \, & = \, - \tfrac{2}{9} 
        \bigl(\langle m,n\rangle  + 
        \langle m \! + \! 1,n \! + \! 1\rangle \bigr)
                        + \tfrac{1}{9} \langle m \! + \! 1,n\rangle  \\[1mm]
  \langle 3m \! + \! 2,3n\rangle    \, & = \,   \tfrac{1}{3} 
        \langle m,n\rangle  - \tfrac{2}{9} \langle m \! + \! 1,n\rangle  \\
  \langle 3m \! + \! 2,3n \! + \! 1\rangle  \, & = \, - \tfrac{2}{9} 
        \bigl(\langle m,n\rangle  + 
        \langle m \! + \! 1,n \! + \! 1\rangle \bigr)
                        + \tfrac{1}{9} \langle m,n \! + \! 1\rangle  \\
  \langle 3m \! + \! 2,3n \! + \! 2\rangle  \, & = \,   \tfrac{1}{9} 
       \langle m,n\rangle  - \tfrac{2}{9} 
                          \bigl(\langle m \! + \! 1,n\rangle  + 
       \langle m,n \! + \! 1\rangle \bigr) .
  \end{split}
\end{equation}
All relations are linear and of the form 
\begin{equation}\label{eq:eta-rec-2}
   \langle 3m \! + \! r,3n \! + \! s\rangle  \, = 
   \sum_{k = 0}^{\min(1,r)} \sum_{\ell=0}^{\min(1,s)}
   \alpha^{(r,s)}_{k,\ell}\ts 
   \langle m \! + \! k,n \! + \! \ell\rangle  \ts ,
\end{equation}
where the coefficients are elements of $\{-\frac{2}{9}, 0,
\frac{1}{9}, \frac{1}{3}, 1 \}$.

\begin{lemma}\label{lem:rec}
  The autocorrelation coefficients\/ $\langle m,n\rangle $ of the
  block substitution system exist for all\/ $m,n \in \ZZ$ and are
  completely determined by\/ $\langle 0,0\rangle =1$ together with the
  linear recursions \eqref{eq:eta-rec}, which hold for all\/ $m,n \in \ZZ$.

  In particular, one has the special values\/ $\langle 0,\pm 1\rangle
  =\langle \pm 1,0\rangle =-\frac{1}{3}$, $\langle \pm 1, \pm 1\rangle
  = \frac{1}{6}$, $\langle 0,\pm 2\rangle =\langle \pm 2,0\rangle
  =\frac{11}{27}$, $\langle \pm 2, \pm 2\rangle =\frac{7}{27}$ and\/
  $\langle \pm 1, \pm 2\rangle =\langle \pm 2, \pm 1\rangle =
  -\frac{8}{27}$. Moreover, the coefficients satisfy\/
  $(-1)^{m+n} \langle m,n \rangle > 0$ for all\/ $m,n \in \ZZ$.
\end{lemma}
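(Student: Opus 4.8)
The plan is to establish the three assertions of Lemma~\ref{lem:rec} in the order they appear, namely existence and determinacy of the coefficients, the explicit small values, and finally the sign property, since each builds on the previous.

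\medskip

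\emph{Existence and the recursion on all of $\ZZ^{2}$.} The existence of $\langle m,n\rangle = \eta(m,n)$ for all $(m,n)\in\ZZ^{2}$ is already guaranteed by unique ergodicity, as noted after Eq.~\eqref{eq:eta-def}: each $\eta(m,n)$ is an orbit average of a continuous function on the uniquely ergodic hull $\XX$. The recursion relations \eqref{eq:eta-rec} were derived above for $m,n\ge 0$ by splitting the defining sum in \eqref{eq:eta-def} modulo $3$ and inserting the fixed-point relation \eqref{eq:fix}; I would first remark that the bookkeeping there only uses the congruence structure of \eqref{eq:fix}, not the sign of $m,n$, so that the same computation yields \eqref{eq:eta-rec} for all $m,n\in\ZZ$ once one accounts for the symmetry \eqref{eq:eta-sym}. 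Concretely, given any $(p,q)\in\ZZ^{2}$, write $p=3m+r$, $q=3n+s$ with $0\le r,s\le 2$ and $m,n\in\ZZ$; the relation \eqref{eq:eta-rec-2} then determines $\langle p,q\rangle$ from values $\langle m+k,n+\ell\rangle$ with $|m+k|+|n+\ell|$ strictly smaller than $|p|+|q|$ whenever $(p,q)\neq(0,0)$ (the base-$3$ digit expansion strictly decreases the $\ell^{1}$-size), so a straightforward induction on $|p|+|q|$ shows that $\langle 0,0\rangle=1$ together with \eqref{eq:eta-rec} determines every coefficient uniquely. Combining this with the consistency of \eqref{eq:eta-rec} with the symmetries \eqref{eq:eta-sym} gives the first paragraph of the statement.

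\medskip

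\emph{The explicit values.} These follow by a finite unwinding of the recursion, using $\langle 0,0\rangle=1$. For instance, $\langle 0,1\rangle = \langle 3\cdot 0,3\cdot 0+1\rangle = -\tfrac{2}{9}\langle 0,0\rangle + \tfrac13\langle 0,1\rangle$, which solves to $\langle 0,1\rangle=-\tfrac13$; then $\langle 1,1\rangle = -\tfrac{2}{9}(\langle 1,0\rangle+\langle 0,1\rangle)+\tfrac19\langle 1,1\rangle$ gives $\langle 1,1\rangle = \tfrac16$ using $\langle 1,0\rangle=\langle 0,1\rangle=-\tfrac13$ from $D_4$-symmetry; and similarly $\langle 0,2\rangle = \tfrac13\langle 0,0\rangle-\tfrac29\langle 0,1\rangle = \tfrac{11}{27}$, $\langle 1,2\rangle = -\tfrac29(\langle 0,0\rangle+\langle 1,1\rangle)+\tfrac19\langle 1,0\rangle = -\tfrac{8}{27}$, and $\langle 2,2\rangle = \tfrac19\langle 0,0\rangle - \tfrac29(\langle 1,0\rangle+\langle 0,1\rangle) = \tfrac{7}{27}$, the remaining cases being forced by \eqref{eq:eta-sym} and $\langle m,n\rangle=\langle n,m\rangle$. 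I would present these computations compactly, as each is a one- or two-line self-consistent linear equation.

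\medskip

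\emph{The sign property.} This is the substantive part of the lemma. Set $\varepsilon(m,n) := (-1)^{m+n}\langle m,n\rangle$; the claim is $\varepsilon(m,n)>0$ for all $(m,n)\in\ZZ^{2}$. By \eqref{eq:eta-sym} it suffices to treat $m,n\ge 0$. I would rewrite the nine cases of \eqref{eq:eta-rec} in terms of $\varepsilon$: tracking the parity factor $(-1)^{(3m+r)+(3n+s)} = (-1)^{m+n}(-1)^{r+s}$ against $(-1)^{(m+k)+(n+\ell)}$, every minus sign in \eqref{eq:eta-rec} should turn into a plus, so that $\varepsilon(3m+r,3n+s)$ becomes a \emph{positive} linear combination (coefficients in $\{\tfrac19,\tfrac13,1\}$, with some zero) of the neighbouring values $\varepsilon(m+k,n+\ell)$, $k\in\{0,\dots,\min(1,r)\}$, $\ell\in\{0,\dots,\min(1,s)\}$. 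Positivity of $\varepsilon$ on all of $\ZZ^{2}$ then follows by induction on $\ell^{1}$-size exactly as in the first step, once the base cases are in place. The base cases needed are precisely the small explicit values computed above: one checks $\varepsilon(0,0)=1>0$, $\varepsilon(0,\pm1)=\varepsilon(\pm1,0)=\tfrac13>0$, $\varepsilon(\pm1,\pm1)=\tfrac16>0$, and the magnitude-$2$ values, which supply positivity for all $(m+k,n+\ell)$ that can appear when unwinding a coefficient of $\ell^{1}$-size $\le$ some small bound; thereafter the positivity is propagated by the positive recursion.

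\medskip

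The main obstacle is verifying that the parity rewriting genuinely makes \emph{all} nine recursion coefficients nonnegative — in particular that the $-\tfrac29$ terms carry a digit sum $r+s$ of the opposite parity to the shift $k+\ell$ they multiply, so the two signs cancel. This is a finite but careful case check across the nine lines of \eqref{eq:eta-rec}: e.g. in $\langle 3m+1,3n+1\rangle$ the term $-\tfrac29\langle m+1,n\rangle$ has $r+s=2$ even and $k+\ell=1$ odd, producing an overall sign $(-1)^{m+n}\cdot(+1)\cdot(-\tfrac29)\cdot(-1) = (-1)^{m+n}\cdot(+\tfrac29)$, i.e. $+\tfrac29\,\varepsilon(m+1,n)$, as wanted; one must confirm this works uniformly. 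A secondary subtlety is ensuring the induction's base layer is thick enough: since a coefficient of size $p+q$ may be expressed through a neighbour of size up to $\lceil (p+q)/3\rceil + 2$ (because of the $k,\ell$ shifts), I would verify that positivity of all $\varepsilon(a,b)$ with $a+b\le 4$ (say) suffices to start the induction cleanly, which the explicit values above deliver.
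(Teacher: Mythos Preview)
Your approach is essentially the paper's: existence via unique ergodicity, the small values obtained by solving the self-consistent linear equations that the recursion produces, and the sign claim via the observation that the recursion for $\varepsilon(m,n)=(-1)^{m+n}\langle m,n\rangle$ has only nonnegative coefficients (the paper says this as ``all terms on the right hand side of any single relation have the same sign, and \ldots\ the parity on the left hand side matches''). Your explicit parity bookkeeping is a welcome elaboration of what the paper leaves to inspection.

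One statement is wrong as written and should be fixed: you assert that $|m+k|+|n+\ell|$ is strictly smaller than $|p|+|q|$ whenever $(p,q)\ne(0,0)$. This fails at $(p,q)=(0,1)$, $(1,0)$ and $(1,1)$, where the recursion is self-referential (e.g.\ $\langle 0,1\rangle$ appears on both sides, as you yourself use when solving for it). The correct bound is $(m+k)+(n+\ell)\le (p+q+4)/3$, which is strictly below $p+q$ only once $p+q\ge 3$; the cases $p+q\le 2$ must be handled as base cases by solving linear equations, exactly as you do in your explicit-values paragraph and as the paper does. Your later remark about needing a ``thick enough base layer'' shows you are aware of this, so just remove the blanket strict-decrease claim from the first paragraph. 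A minor slip: the positive coefficients in the $\varepsilon$-recursion lie in $\{\tfrac{1}{9},\tfrac{2}{9},\tfrac{1}{3},1\}$, not $\{\tfrac{1}{9},\tfrac{1}{3},1\}$.
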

\begin{proof}
  The existence of the coefficients is clear by unique ergodicity, as
  discussed earlier.  The recursion relations \eqref{eq:eta-rec}, for
  $m,n\ge 0$, follow from an elementary (though somewhat tedious)
  calculation as mentioned above. The initial condition $\langle
  0,0\rangle =1$ is clear, while the other special values for
  non-negative arguments can then be successively calculated from the
  recursions by solving linear equations. As is easily seen, the
  coefficients $\langle m,n \rangle$ with $m, n \ge 2$ are then
  determined recursively.

  One can now explicitly check that the same method (formally) also
  gives the other special values, which are in agreement with the
  symmetry relations \eqref{eq:eta-sym} and $\langle m,n\rangle
  =\langle n,m\rangle $. Indeed, one can verify that the recursions
  (extended to all $m,n\in\ZZ$) respect \emph{all}
  $D_{4}$-symmetries. The linear recursion relations thus determine
  $\langle m,n\rangle $ from $\langle 0,0\rangle $ on the entire
  lattice $\ZZ^{2}$.

  The positivity claim is certainly true for all $-2 \le m,n \le 2$.
  Inspecting the recursion relations \eqref{eq:eta-rec}, one sees that
  all terms on the right hand side of any single relation have the
  same sign, and that the parity on the left hand side matches the
  formula, so that our claim follows inductively. In particular, due
  to the recursive structure, no coefficient can vanish.
\end{proof}

Let us formulate another, rather surprising consequence of the
recursive structure, which will significantly simplify one of our
later estimates.

\begin{lemma}\label{lem:pos}
  One has\/ $\langle m,0\rangle^2 - \langle m,n\rangle^2 \ge 0$ for
  all\/ $m,n \in \ZZ$.
\end{lemma}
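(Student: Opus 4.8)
The plan is to exploit the self-similar structure encoded in the recursions \eqref{eq:eta-rec} and prove the inequality by induction on $\max(|m|,|n|)$, reducing the claim for $(3m+r,3n+s)$ to claims about $\langle m',n'\rangle$ with strictly smaller arguments. By the $D_4$-symmetry (in particular $\langle m,n\rangle = \langle n,m\rangle$ and the sign relations \eqref{eq:eta-sym}), it suffices to treat $m\ge n\ge 0$, and the base cases $0\le m,n\le 2$ follow from the explicit special values listed in Lemma~\ref{lem:rec}. So the real work is the inductive step.

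First I would set $a=\langle m,n\rangle$, $b=\langle m+1,n\rangle$, $c=\langle m,n+1\rangle$, $e=\langle m+1,n+1\rangle$ together with their ``$n=0$ companions'' $a_0=\langle m,0\rangle$, $b_0=\langle m+1,0\rangle$, $c_0=\langle m,1\rangle$, $e_0=\langle m+1,1\rangle$; by the inductive hypothesis we know $a_0^2\ge a^2$, $b_0^2\ge b^2$, etc. Writing out $\langle 3m+r,0\rangle$ and $\langle 3m+r,3n+s\rangle$ from \eqref{eq:eta-rec-2} as the same linear combination $\sum_{k,\ell}\alpha^{(r,s)}_{k,\ell}(\cdot)$ but applied to the ``$0$'' versus the ``$n$'' quadruple, the goal becomes an inequality between squares of two such linear forms. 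Here the crucial structural fact from the proof of Lemma~\ref{lem:rec} enters: within any single row of \eqref{eq:eta-rec} all the terms on the right-hand side carry the same sign, so each relevant linear form equals $\pm$(a sum of nonnegative quantities), namely $|\langle 3m+r,3n+s\rangle| = \sum_{k,\ell}|\alpha^{(r,s)}_{k,\ell}|\,|\langle m+k,n+\ell\rangle|$ and similarly with $n\rightsquigarrow 0$. Thus it is enough to show, coefficientwise, that $|\langle m+k,0\rangle|\ge|\langle m+k,n+\ell\rangle|$ — which is exactly the inductive hypothesis applied to the four shifted arguments — and then conclude by monotonicity of $t\mapsto t^2$ on $[0,\infty)$ for sums of nonnegatives. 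One subtlety: the shifted arguments $m+k,n+\ell$ with $k,\ell\in\{0,1\}$ need not all have $\max$ strictly smaller than $\max(3m+r,3n+s)$ in a naive sense, but since $r,s\le 2$ one has $3m+r\ge 3m > m+1\ge m+k$ whenever $m\ge 1$, so the induction is well-founded once the genuinely small cases (those with some argument $\le 2$) are absorbed into the base.

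The step I expect to be the main obstacle is handling the ``mixed'' rows of \eqref{eq:eta-rec}, i.e.\ $\langle 3m+1,3n+1\rangle$, $\langle 3m+1,3n+2\rangle$, $\langle 3m+2,3n+1\rangle$, $\langle 3m+2,3n+2\rangle$, where the comparison row is $\langle 3m+1,0\rangle$ or $\langle 3m+2,0\rangle$ — these have a \emph{different} coefficient pattern (only two terms, not three), so the two linear forms one is comparing are not ``the same combination of different inputs'' but genuinely different combinations, and the clean coefficientwise argument above does not directly apply. For those rows I would instead expand both sides fully down to a common refinement (one further application of the recursion to the $3n+\cdot$ entries, or equivalently comparing $\langle 3m+r,3n+s\rangle^2$ against $\langle 3m+r,3\lfloor n\rfloor\cdot\rangle^2$ via a two-step unfolding), and reduce to a finite list of quadratic inequalities in $a_0,b_0,a,b,c,e$ subject to the sign constraints and the componentwise bounds $a_0^2\ge a^2$, $b_0^2\ge b^2$, $a_0^2\ge c^2$ (the last because $\langle m,0\rangle$ also dominates $\langle m,1\rangle$ by hypothesis, using $D_4$ to rewrite $\langle m,n+1\rangle$ appropriately). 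Each such inequality should follow from $(x^2-y^2)\ge 0$-type bookkeeping together with $|a|\ge|b|,|c|,|e|$ — but verifying that the finitely many cases all close is the tedious heart of the argument, and I would double-check it numerically on the explicit coefficient values before committing to the write-up.
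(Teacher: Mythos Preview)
Your overall framework matches the paper's: reduce to showing $|\langle m,n\rangle|\le|\langle m,0\rangle|$ for $m,n\ge 0$, use the sign property $(-1)^{m+n}\langle m,n\rangle>0$ from Lemma~\ref{lem:rec} to rewrite the recursions \eqref{eq:eta-rec} for the absolute values with all coefficients replaced by their moduli, and induct. The base cases and the well-foundedness check are fine.

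The gap is in your treatment of the ``mixed'' rows. You correctly notice that $|\langle 3m\!+\!r,0\rangle|$ and $|\langle 3m\!+\!r,3n\!+\!s\rangle|$ are \emph{different} linear combinations when $s\ne 0$, and you propose to close the resulting quadratic inequalities using, among other things, ``$|a|\ge|b|,|c|,|e|$''. But $|a|\ge|b|$ means $|\langle m,n\rangle|\ge|\langle m\!+\!1,n\rangle|$, which is \emph{not} part of the inductive hypothesis and is in fact false: already $|\langle 1,0\rangle|=\tfrac{1}{3}<\tfrac{11}{27}=|\langle 2,0\rangle|$. So the bookkeeping you sketch cannot close as written.

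What makes the mixed cases work cleanly --- and what the paper's one-line induction exploits --- is a simple column-sum identity for the coefficients: for every $r,s\in\{0,1,2\}$ and $k\in\{0,1\}$ one has
\[
   \sum_{\ell=0}^{1}\bigl|\alpha^{(r,s)}_{k,\ell}\bigr|
   \;\le\;\bigl|\alpha^{(r,0)}_{k,0}\bigr|\ts ,
\]
with equality whenever $r\ge 1$ (check directly from \eqref{eq:eta-rec}: e.g.\ for $(r,s)=(1,1)$ the $k=1$ column gives $\tfrac{2}{9}+\tfrac{1}{9}=\tfrac{1}{3}$). Granting this, the inductive step is immediate:
\[
   \bigl|\langle 3m\!+\!r,3n\!+\!s\rangle\bigr|
   \,=\,\sum_{k,\ell}\bigl|\alpha^{(r,s)}_{k,\ell}\bigr|\,
        \bigl|\langle m\!+\!k,n\!+\!\ell\rangle\bigr|
   \,\le\,\sum_{k}\Bigl(\sum_{\ell}\bigl|\alpha^{(r,s)}_{k,\ell}\bigr|\Bigr)
        \bigl|\langle m\!+\!k,0\rangle\bigr|
   \,\le\,\bigl|\langle 3m\!+\!r,0\rangle\bigr|\ts ,
\]
using only the inductive hypothesis $|\langle m\!+\!k,n\!+\!\ell\rangle|\le|\langle m\!+\!k,0\rangle|$ in the first inequality. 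No two-step unfolding or quadratic case analysis is needed.
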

\begin{proof}
  Our claim follows if we show that $|\langle m,n \rangle|\le |\langle
  m,0 \rangle|$ holds for all $m,n\ge 0$.  Since $|\langle m,n
  \rangle|=(-1)^{m+n}\langle m,n \rangle$ by Lemma~\ref{lem:rec}, the
  absolute values satisfy the recursion relations \eqref{eq:eta-rec} with
  each coefficient on the right-hand sides replaced by its
  modulus. The inequalities are true for $0\le m,n\le 2$ (by
  inspection of the initial condition and the special values of
  Lemma~\ref{lem:rec}). The claim now follows by
  induction, where one applies the recursions once to differences of
  the form $|\langle 3m\!+\!r,0 \rangle| - |\langle 3m\!+\!r,3n\!+\!s
  \rangle|$, with $0\le r,s\le 2$, which are all non-negative.
\end{proof}

The autocorrelation coefficients define a positive definite function
on $\ZZ^{2}$. By the Herglotz-Bochner theorem, compare 
Lemma~\ref{lem:gen-Wiener} in the appendix, it is thus the
(inverse) Fourier transform of a unique positive measure on the
$2$-torus $\TT^{2} = [0,1)^{2}$, so that
\begin{equation}\label{eq:mu-def}
     \langle m,n\rangle  \, =  \int_{0}^{1} \int_{0}^{1} 
     \exp \bigl(2 \pi \ii (mx + ny)\bigr)
     \dd \mu (x,y) \, = \int_{0}^{1} \int_{0}^{1} 
     \cos \bigl( 2 \pi (mx + ny)\bigr)  \dd \mu (x,y) .
\end{equation}
The second equality follows from the $D_{4}$-symmetry of the
coefficients, which implies the corresponding symmetry for $\mu$.  As
$\langle 0,0 \rangle = 1$, we see that $\mu$ is a probability measure
on $\TT^{2}$.

\begin{remark}
The Herglotz-Bochner theorem can be used to construct examples of
positive definite functions on $\ZZ^{2}$ with $D_{4}$-symmetry that do
\emph{not} satisfy the inequalities of Lemma~\ref{lem:pos}. For
instance, $\alpha + (1-\alpha) (-1)^{m+n}$ is the Fourier transform of
$\alpha \ts \delta^{}_{(0,0)} + (1-\alpha)\ts \delta^{}_{(\frac{1}{2},
  \frac{1}{2})}$, but violates the inequalities for $\alpha\in (0,1)$
and $m+n$ odd.
\end{remark}

Before we continue our analysis of the planar case, let us look at
an important one-dimensional subsystem. It will reappear later in
an essential way. Moreover, it serves to introduce the methods we use.

\section{A rank $1$ subsystem}\label{sec:1D}

Consider $\epsilon (m) = \langle m,0\rangle $ for $m\in\ZZ$, which
defines a positive definite function on $\ZZ$, with $(-1)^{m} \epsilon
(m) > 0$ for all $m\in\ZZ$ by Lemma~\ref{lem:pos}. One has $\epsilon
(0)=1$ together with the recursions
\begin{equation}\label{eq:eps-rec}
  \begin{split}
  \epsilon(3m) \, &= \, \epsilon(m)\\
  \epsilon(3m+1) \, &= \, -\frac{2}{9}\epsilon(m) + 
                  \frac{1}{3}\epsilon(m+1)\\
  \epsilon(3m+2) \, &=\, \frac{1}{3}\epsilon(m)-
                  \frac{2}{9}\epsilon(m+1) \ts ,
  \end{split}
\end{equation}
which hold for all $m\in\ZZ$.  We consider the measure $\gamma =
\epsilon\ts \delta^{}_{\ZZ}$ on the lattice $\ZZ$, which corresponds
to the autocorrelation of our planar system along the $x$ direction
(or, by symmetry, along the $y$ direction). Its Fourier transform, by
\cite[Thm.~1]{B}, is of the form $\widehat{\gamma}=\nu \ast
\delta^{}_{\ZZ}$ with $\nu=\widehat{\gamma}|_{[0,1)}$.  Following the
approach of \cite{Kaku,BGG}, one can easily see that $\nu$ is a purely
singular continuous measure, and it is possible to compute the
corresponding distribution function explicitly.

The absence of a point part of $\nu$ follows by Wiener's lemma. We
provide a proof of the latter in the Appendix that is tailored to our
later needs; compare \cite[Cor.~7.11]{Katz} or
\cite[Sec.~4.6]{Nad}. Define $\Sigma_{1} (N) = \sum_{m=0}^{N-1}
\epsilon(m)^{2}$, and recall that $\lvert \epsilon (n)\rvert \le
\epsilon (0)$ from positive definiteness. Now, using Jensen's
inequality via $(a+b)^2\le 2(a^2+b^2)$, we can estimate
\begin{eqnarray}
     \Sigma_{1} (3N)\, 
      & = & \sum_{r=0}^{2}\sum_{m=0}^{N-1} \epsilon(3m+r)^{2} 
     \nonumber \\
    &  = & \Sigma_{1} (N) + \sum_{m=0}^{N-1} \bigl(-\frac{2}{9}\epsilon(m)
    + \frac{1}{3}\epsilon(m+1)\bigr)^2 + \bigl(\frac{1}{3}\epsilon(m)-
     \frac{2}{9}\epsilon(m+1)\bigr)^2 \\[1mm]
      & \le & \bigl(1+\frac{52}{81}\bigr) \,\Sigma_{1}(N) + 
         \frac{26}{81}\bigl(\epsilon(N)^2-\epsilon(0)^2\bigr)
       \, \le \,\frac{133}{81}\, \Sigma_{1} (N) \, ,
     \nonumber
\end{eqnarray} 
which shows that $\Sigma_{1} (N) =\mathcal{O}(N^{\alpha})$ with
$\alpha=\log_{3}(133/81)<1/2$, so the measure $\nu$ has no points.
The absence of an absolutely continuous component can be shown in
complete analogy to the Thue-Morse case by invoking the
Riemann-Lebesgue lemma, so we conclude that $\nu$ is purely singular
continuous. The corresponding distribution function $\varPhi$ is defined
via $\varPhi (x) = \nu \bigl( [0,x] \bigr)$ for $x\in [0,1]$ and extended
to $\RR$ by means of $\varPhi (x+n) = n + \varPhi (x)$ for $n\in\ZZ$, which
gives a continuous, non-decreasing function on $\RR$. The general
properties of $\varPhi$ follow by the arguments used for the
(generalised) Thue-Morse system; compare \cite{BBGG,BGG} for
details. Let us summarise some of them as follows.

The continuous distribution function $\varPhi$ possesses the Fourier
series representation
\begin{equation}\label{eq:F}
   \varPhi (x) \, = \,x + \sum_{m\ge 1} \frac{\epsilon(m)}{m\pi} 
    \sin(2\pi mx) \, = \,
   x \sum_{m\in\ZZ} \epsilon(m) \, \mathrm{sinc} (2 \pi m x)\, ,
\end{equation}
with $\mathrm{sinc} (z) = \frac{\sin (z)}{z}$ and $\mathrm{sinc}
(0)=1$. The first sum is uniformly (but not absolutely)
convergent. The second equality is a consequence of $\varPhi$ being
the integral of a (formal) Fourier-Stieltjes series with coefficients
$\epsilon(m)$; compare \cite[Sec.~1.2.6]{P} and \cite[Sec.~7]{Katz}.
The limit can be approximated by distribution functions $\varPhi_{n}$
of absolutely continuous positive measures,
\begin{equation}\label{eq:Fn}
  \varPhi^{}_{n}(x) \, = \, x + \sum_{m=1}^{3^{n}-1} 
                \frac{c^{}_{n}(m)}{m\pi}\sin(2\pi mx)\, ,
\end{equation}
where $\varPhi^{}_{0} (x) = x$ (which is the distribution function of
Lebesgue measure $\lambda$) and the coefficients satisfy the initial
conditions $c^{}_{0}(m)=\delta^{}_{m,0}$ together with the recursions
\[
   \begin{split}
   c^{}_{n}(3m)\, & = \, c^{}_{n-1}(m)\\
   c^{}_{n}(3m \! + \! 1)\, & = \, -\frac{2}{9}c^{}_{n-1}(m) + 
                        \frac{1}{3}c^{}_{n-1}(m \! + \! 1)\\
   c^{}_{n}(3m \! + \! 2)\, & = \, \frac{1}{3}c^{}_{n-1}(m) - 
                        \frac{2}{9}c^{}_{n-1}(m \! + \! 1) \, .
   \end{split}
\]
The distribution functions $\varPhi_{n}$ define a sequence of
absolutely continuous measures that converge vaguely to our singular
continuous measure defined by $\varPhi$.  Here, the individual Fourier
series for $\varPhi^{}_{n} (x) - x$ are finite sums, hence
trigonometric polynomials with fundamental period $1$. Moreover, we
have uniform convergence $\varPhi_{n}\longrightarrow\varPhi$ as
$n\to\infty$ by the same argument as in \cite{BGG}, which is based on
the stepping-stone argument from \cite[Thm.~30.13]{Bauer}.

The corresponding densities $\varphi^{}_{n}(x)$, defined by $\dd
\varPhi^{}_{n}(x) =\varphi^{}_{n}(x)\dd x$, have the Riesz product
representation
\begin{equation}\label{eq:fn}
   \varphi^{}_{n}(x) \, = \, \prod_{\ell=0}^{n-1} \theta(3^{\ell}x)\, ,
\end{equation}
where $\theta(x)=1-\frac{4}{9}\cos(2\pi x) + \frac{2}{3}\cos(4\pi x)$
is a strictly positive, $1$-periodic function that satisfies $\theta
(1-x) = \theta (x)$ and $\int_{0}^{1} \theta (x) \dd x = 1$. The
functions $\varPhi^{}_{4}$ and $\varphi^{}_{4}$ are illustrated in
Figure~\ref{fig:sqsubmeas}.  The measure defined by $\varPhi$ is
represented as the infinite Riesz product $\prod_{\ell=0}^{\infty}
\theta (3^{\ell} x)$, to be understood with convergence in the vague
topology. In summary:

\begin{figure}
\begin{center}
  \includegraphics[width=0.45\textwidth]{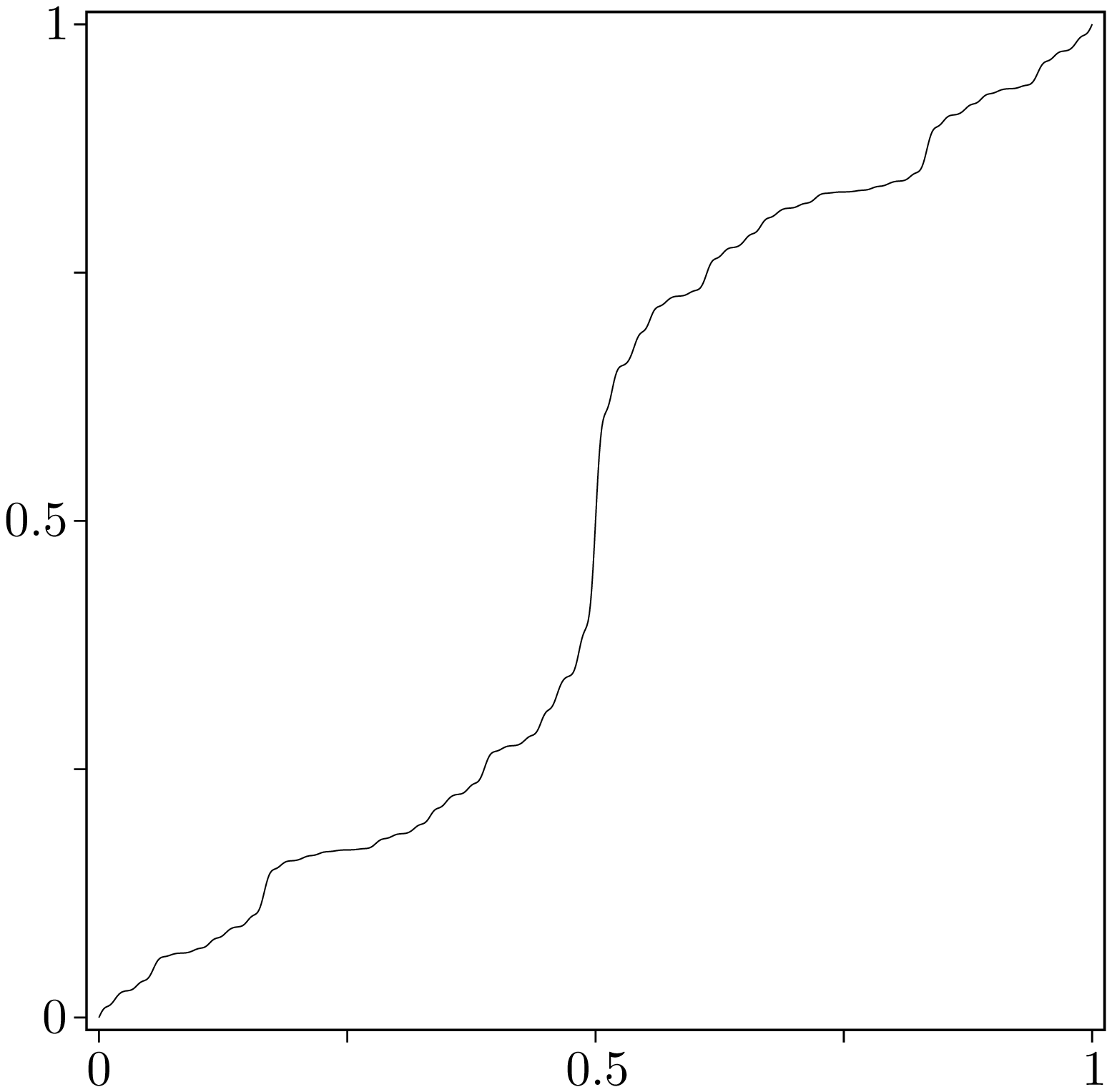}
  \hspace*{0.025\textwidth}
  \includegraphics[width=0.45\textwidth]{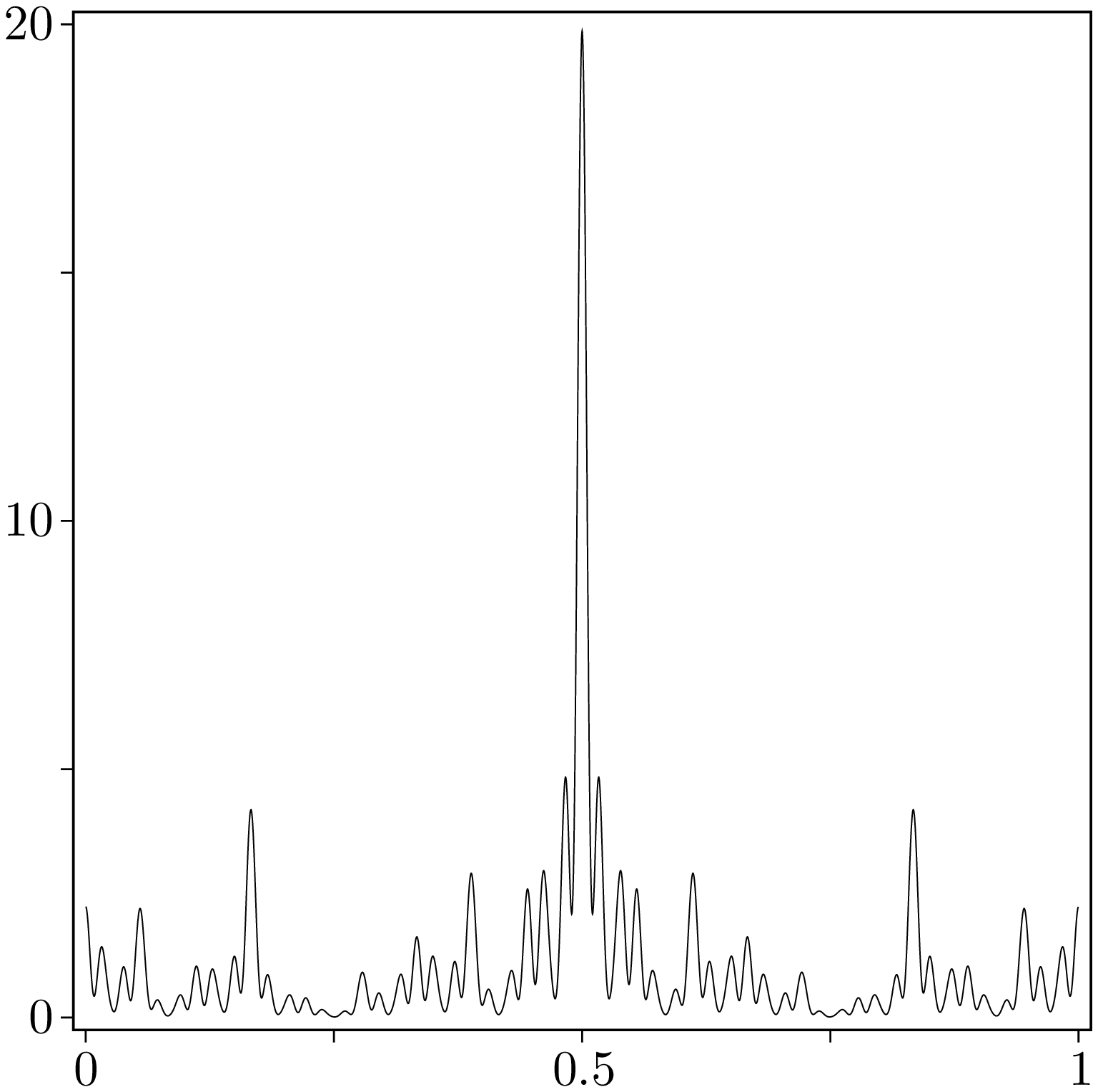}
\end{center}
\caption{\label{fig:sqsubmeas}Distribution function $\varPhi_{4}(x)$
  (left) and its smooth density $\varphi_{4}(x)$ (right), according to
  Eqs.~\eqref{eq:Fn} and \eqref{eq:fn}.}
\end{figure}

\begin{prop}\label{prop:1D}
  The function\/ $\epsilon \! : \, \ZZ \longrightarrow \RR$ defined by
  Eq.~\eqref{eq:eps-rec} together with\/ $\epsilon(0) =1$ is positive 
  definite. It is the Fourier transform of a probability measure\/ $\nu$
  on\/ $[0,1)$ that is purely singular continuous, with distribution
  function\/ $\varPhi$ of Eq.~\eqref{eq:F} and a representation as an
  infinite Riesz product.   \qed
\end{prop}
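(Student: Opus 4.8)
The statement mostly assembles facts gathered in the preceding discussion, so the plan is to verify each assertion in turn and to isolate where the actual work lies.

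First I would settle positive definiteness. Since $\epsilon(m)=\langle m,0\rangle$ is the restriction of the (positive definite) autocorrelation function $\eta$ on $\ZZ^{2}$ to the subgroup $\ZZ\times\{0\}$, and restrictions of positive definite functions to subgroups remain positive definite, $\epsilon$ is positive definite on $\ZZ$; equivalently, $\epsilon\,\delta^{}_{\ZZ}$ is the autocorrelation of the one-dimensional slice and thus positive definite by construction. The Herglotz--Bochner theorem, in the form of Lemma~\ref{lem:gen-Wiener}, then provides a unique positive measure $\nu$ on $\TT=[0,1)$ with $\epsilon(m)=\int_{0}^{1}\exp\bigl(2\pi\ii mx\bigr)\dd\nu(x)$; because $\epsilon(0)=1$, it is a probability measure, and the periodisation $\widehat{\gamma}=\nu\ast\delta^{}_{\ZZ}$ is then \cite[Thm.~1]{B}.

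Next I would show that $\nu$ has no atoms, via Wiener's lemma (again Lemma~\ref{lem:gen-Wiener}), which gives $\sum_{x\in\TT}\nu(\{x\})^{2}=\lim_{N\to\infty}\frac{1}{2N+1}\sum_{|m|\le N}|\epsilon(m)|^{2}$. Using that $\epsilon$ is even, the right-hand side equals $\lim_{N}(2\ts\Sigma_{1}(N+1)-1)/(2N+1)$ with $\Sigma_{1}(N)=\sum_{m=0}^{N-1}\epsilon(m)^{2}$, and the estimate $\Sigma_{1}(3N)\le\frac{133}{81}\Sigma_{1}(N)$ established above --- which rests only on the Jensen-type bound $(a+b)^{2}\le 2(a^{2}+b^{2})$ together with $|\epsilon(N)|\le\epsilon(0)=1$, so that $\epsilon(N)^{2}-\epsilon(0)^{2}\le 0$ --- yields $\Sigma_{1}(N)=\mathcal{O}(N^{\log_{3}(133/81)})$ with exponent strictly below $\tfrac12$. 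Hence the limit vanishes and the point part of $\nu$ is zero. The absolutely continuous part is then excluded exactly as in the Thue--Morse case: the relation $\epsilon(3m)=\epsilon(m)$ makes $\nu$ invariant under the $\times 3$ map $x\mapsto 3x\bmod 1$ on $\TT$ (so in particular $\epsilon(3^{k})=\epsilon(1)=-\tfrac13$ does not tend to $0$), and, combined with the Riesz product structure below --- $\varphi^{}_{n}(x)=\theta(x)\ts\varphi^{}_{n-1}(3x)$ with $\theta>0$, $\int_{0}^{1}\theta=1$, yet $\int_{0}^{1}\theta^{2}=\frac{107}{81}>1$ --- the Riemann--Lebesgue lemma together with the purity of such self-similar measures (compare \cite{BGG,BBGG}) forces the absolutely continuous component to be trivial. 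Thus $\nu$ is purely singular continuous.

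Finally, the Fourier series \eqref{eq:F} for $\varPhi$ and the approximants \eqref{eq:Fn} follow by integrating the Fourier--Stieltjes series with coefficients $\epsilon(m)$ termwise, the recursions for the $c^{}_{n}(m)$ reduce to \eqref{eq:eps-rec} as $n\to\infty$, and the product form \eqref{eq:fn} of the densities $\varphi^{}_{n}$ is an immediate induction from $\varphi^{}_{n}(x)=\theta(3^{n-1}x)\ts\varphi^{}_{n-1}(x)$; the uniform convergence $\varPhi^{}_{n}\to\varPhi$ comes from the stepping-stone argument \cite[Thm.~30.13]{Bauer}, exactly as in \cite{BGG}, so that $\nu$ is the vague limit of the $\varphi^{}_{n}\lambda$, i.e.\ the infinite Riesz product $\prod_{\ell\ge 0}\theta(3^{\ell}x)$. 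I expect the genuine obstacle to be the exclusion of the absolutely continuous part: unlike the classical situation, the second harmonic of $\theta$ places the spectral blocks of successive factors $\theta(3^{\ell}x)$ at ratios $3^{\ell+1}/(2\cdot 3^{\ell})=\tfrac32<3$, so the usual lacunarity hypothesis is not literally met, and one has to check that the frequency sums $\sum_{\ell}j^{}_{\ell}\ts 3^{\ell}$ with digits $j^{}_{\ell}\in\{0,\pm1,\pm2\}$ still isolate the zero frequency cleanly (they do, since $|j^{}_{\ell_{0}}|<3$ at the least significant nonzero digit), which is precisely what lets the Thue--Morse argument carry over.
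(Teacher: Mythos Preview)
Your proposal is correct and tracks the paper's argument closely: positive definiteness via restriction of $\eta$, absence of atoms via the $\Sigma_{1}(3N)\le\frac{133}{81}\Sigma_{1}(N)$ estimate and Wiener's lemma, and the Riesz product/distribution function description exactly as written in Section~\ref{sec:1D}.

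The one place where you diverge is the exclusion of the absolutely continuous part. The paper (and the Thue--Morse template in \cite{BGG} it cites) uses only the purity-plus-Riemann--Lebesgue route you already sketched: the recursion \eqref{eq:eps-rec} is linear and determines $\epsilon$ from $\epsilon(0)$, the orthogonal decomposition $\nu=\nu_{\mathsf{sc}}+\nu_{\mathsf{ac}}$ forces each piece's coefficients to satisfy the same recursion, hence $\epsilon_{\mathsf{ac}}=c\ts\epsilon$ for some $c\ge 0$, and $\epsilon(3^{k})=-\tfrac13\not\to 0$ forces $c=0$. Your additional ingredients --- the $\times 3$-invariance of $\nu$, the value $\int_{0}^{1}\theta^{2}=\frac{107}{81}$, and the digit-uniqueness check for $\sum j_{\ell}3^{\ell}$ with $j_{\ell}\in\{0,\pm 1,\pm 2\}$ --- belong to the classical (Zygmund/Peyri\`ere) Riesz product singularity criterion, which is a genuinely different route. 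That route does face the lacunarity issue you raise, and your resolution of it is correct; but the paper's recursion-based argument sidesteps the obstacle entirely, so what you flag as the ``genuine obstacle'' is not one for the approach actually taken.
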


\section{Analysis of the planar case}

Since all ingredients to the above arguments are also available in
higher dimensions, we proceed constructively. Here, with the
coefficients from Lemma~\ref{lem:rec}, we define
\begin{equation}\label{eq:sig-def}
   \Sigma (N) \, := \, \sum_{m,n=0}^{N-1}   \langle m,n\rangle^2 ,
\end{equation}
which is now a sum over $N^2$ terms.

\begin{lemma}\label{lem:W-crit}
  The sums of Eq.~\eqref{eq:sig-def} satisfy\/ $\Sigma (3N) \le
   \frac{319}{81}\, \Sigma (N)< 4 \Sigma (N)$.
\end{lemma}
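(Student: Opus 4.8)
The plan is to mimic the one–dimensional estimate from Section~\ref{sec:1D}, but now the nine cases of the recursion \eqref{eq:eta-rec} must be summed simultaneously. First I would split the sum defining $\Sigma(3N)$ into the nine residue classes modulo $3$ in each coordinate, writing
\[
   \Sigma (3N) \, = \, \sum_{r,s=0}^{2} \; \sum_{m,n=0}^{N-1}
   \langle 3m \! + \! r, 3n \! + \! s\rangle^{2} ,
\]
and then substitute the explicit right–hand sides of \eqref{eq:eta-rec} into each squared term. The $(0,0)$ term contributes exactly $\Sigma(N)$, since $\langle 3m,3n\rangle=\langle m,n\rangle$. Each of the remaining eight terms is a square of a linear combination of at most four of the coefficients $\langle m \! + \! k, n \! + \! \ell\rangle$ with $k,\ell\in\{0,1\}$ and coefficients from $\{-\tfrac29,\tfrac19,\tfrac13\}$; expanding the square produces pure squares $\langle m \! + \! k,n \! + \! \ell\rangle^{2}$ and cross terms $\langle m \! + \! k,n \! + \! \ell\rangle\langle m \! + \! k',n \! + \! \ell'\rangle$.

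The key simplification is to bound every cross term by squares. Here I would use Lemma~\ref{lem:rec}: since $(-1)^{m+n}\langle m,n\rangle>0$, any two coefficients appearing together in one of the relations \eqref{eq:eta-rec} have the \emph{same} sign (this is exactly the observation used in the positivity proof), so every cross term that occurs is non-negative, and one can bound it above via $2ab\le a^{2}+b^{2}$. Carrying this out in all eight cases turns $\Sigma(3N)$ into a linear combination, with explicit rational coefficients, of the four shifted sums
\[
   \sum_{m,n=0}^{N-1}\langle m \! + \! k,n \! + \! \ell\rangle^{2},
   \qquad k,\ell\in\{0,1\}.
\]
Each of these differs from $\Sigma(N)$ only by boundary strips of width one (a single row and/or column of $\langle m,n\rangle^{2}$ terms with indices near $0$ or near $N$), and since $|\langle m,n\rangle|\le 1$ by positive definiteness, each shifted sum equals $\Sigma(N)$ up to an error of order $\mathcal{O}(N)$, or — if one wants a clean inequality rather than an asymptotic one — can be controlled by $\Sigma(N)$ plus a correction of the same flavour as the $\tfrac{26}{81}(\epsilon(N)^{2}-\epsilon(0)^{2})$ term in the $1$D computation, invoking $\langle 0,0\rangle=1$ being maximal. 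Collecting the numerical factors should yield the stated bound $\Sigma(3N)\le\tfrac{319}{81}\,\Sigma(N)$, and $\tfrac{319}{81}<4$ is immediate.

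The main obstacle is purely bookkeeping: getting the constant $\tfrac{319}{81}$ exactly right. Sloppy use of $2ab\le a^{2}+b^{2}$ on every cross term is likely to overshoot, so one must be careful about which cross terms genuinely appear (only those allowed by the structure of \eqref{eq:eta-rec-2}) and organise the count of how many times each shifted square $\langle m \! + \! k,n \! + \! \ell\rangle^{2}$ is hit across the eight cases. It is plausible that Lemma~\ref{lem:pos} is meant to help here as well — replacing some $\langle m \! + \! k, n \! + \! \ell\rangle^{2}$ with the coordinate-axis value $\langle m \! + \! k, 0\rangle^{2}$, or otherwise trimming the two-dimensional sum back toward one-dimensional sums whose growth we already control — which is presumably why that lemma was flagged as simplifying ``one of our later estimates.'' I would first try the direct square-counting; if the constant comes out too large, I would use Lemma~\ref{lem:pos} to discard the genuinely two-dimensional excess. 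Once the bound $\Sigma(3N)\le\tfrac{319}{81}\Sigma(N)$ is in hand, iterating gives $\Sigma(3^{n})\le(\tfrac{319}{81})^{n}\Sigma(1)$, hence $\Sigma(N)=\mathcal{O}(N^{\beta})$ with $\beta=\log_{3}(319/81)<\log_{3}4<2$, which is the Wiener-type input needed to rule out a point part of $\mu$ later.
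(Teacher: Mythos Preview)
Your overall strategy is sound and would yield a valid proof, but it diverges from the paper's argument at the key algebraic step, and your worry about hitting the exact constant $\tfrac{319}{81}$ is misplaced.

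The paper does \emph{not} bound cross terms one by one. Instead, after expanding the nine squared recursions it observes the exact identity
\[
   \sum_{r,s=0}^{2}\langle 3m\!+\!r,3n\!+\!s\rangle^{2}
   \,=\,\frac{1}{81}\Bigl(84a^{2}+14b^{2}+14c^{2}+7d^{2}
   +2\bigl(4a-2b-2c+d\bigr)^{2}\Bigr),
\]
with $a=\langle m,n\rangle$, $b=\langle m,n\!+\!1\rangle$, $c=\langle m\!+\!1,n\rangle$, $d=\langle m\!+\!1,n\!+\!1\rangle$; \emph{all} cross terms are absorbed into the single perfect square. Only then is Jensen applied once, in the four-term form $(p+q+r+s)^{2}\le 4(p^{2}+q^{2}+r^{2}+s^{2})$, which produces exactly the coefficients summing to $319$. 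Your term-by-term bounding of cross terms (which are indeed all positive, as you correctly argue from Lemma~\ref{lem:rec}) would in fact give a \emph{smaller} constant --- a direct computation yields $\tfrac{281}{81}$ --- so it proves the lemma a fortiori; you will simply not reproduce $\tfrac{319}{81}$ on the nose, and there is no reason to try. What matters for Wiener's lemma is only that the constant is below $9$ (and for a clean growth exponent, below $4$).

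Your reading of Lemma~\ref{lem:pos} is slightly off. It is not used to reduce two-dimensional sums to one-dimensional ones; it is used precisely to show that each shifted sum satisfies
\[
   \sum_{m,n=0}^{N-1}\langle m\!+\!k,n\!+\!\ell\rangle^{2}\,\le\,\Sigma(N)
   \qquad(k,\ell\in\{0,1\}),
\]
because the boundary correction is a sum of terms $\langle m,N\rangle^{2}-\langle m,0\rangle^{2}\le 0$ (and symmetric variants). This is what turns the $\mathcal{O}(N)$ remainder you mention into a genuine inequality with no error term; the paper's remark immediately after the lemma confirms that without Lemma~\ref{lem:pos} one only gets $\Sigma(3N)\le\tfrac{319}{81}\Sigma(N)+\mathcal{O}(N)$.
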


\begin{proof}
The recursion relations \eqref{eq:eta-rec} imply that
\[
  \begin{split}
    \Sigma (3N) \; & = \, \sum_{m,n=0}^{3N-1} \langle m,n \rangle^{2}
       \; =  \sum_{m,n=0}^{N-1}\, \sum_{r,s=0}^{2}
       \langle 3m\! + \! r, 3n \! + \! s\rangle^{2} \\
    & = \, \frac{1}{81} \sum_{m,n=0}^{N-1} 84 \langle m,n \rangle^{2}
     + 14 \langle m,n \! + \! 1\rangle^{2} + 
       14 \langle m\! + \! 1,n \rangle^{2}
     + 7 \langle m\! + \! 1, n \! + \! 1 \rangle^{2} \\
    & \hphantom{ = \, \frac{1}{81} \sum_{m,n=0}^{N-1}}\;
     + 2 \bigl( 4 \langle m,n \rangle - 2 \langle m,n\! +\! 1 \rangle
     - 2 \langle m\! + \! 1,n \rangle  + \langle m\! + \! 1,
       n \! + \! 1 \rangle \bigr)^{2}.
  \end{split}
\]
One can now use Jensen's inequality in the form $(a+b+c+d)^{2} \le
4\ts (a^2 + b^2 + c^2 + d^2)$ and observe the relation
\[
   \sum_{m,n=0}^{N-1} \langle m,n \! + \! 1\rangle^{2} 
   \, = \,
   \Sigma (N) + \sum_{m=0}^{N-1} \bigl( \langle m,N \rangle^{2}
    - \langle m,0 \rangle^{2} \bigr) \, \le \, \Sigma (N) \ts ,
\]
which follows from Lemma~\ref{lem:pos}. The analogous estimate
holds for $\sum_{m,n=0}^{N-1} \langle m \! + \! 1,n\rangle^{2}$,
while
\[
  \begin{split}
   \sum_{m,n=0}^{N-1}  \langle m \! + \! 1, & n \! + \! 1\rangle^{2}
   \, = \, \Sigma (N) + \bigl(\langle N,N \rangle^{2} 
           - \langle 0,0 \rangle^{2}\bigr) \\
   & + \sum_{m=1}^{N-1} \bigl(\langle m,N \rangle^{2} 
           - \langle m,0 \rangle^{2}\bigr)
     + \sum_{n=1}^{N-1} \bigl(\langle N,n \rangle^{2} 
           - \langle 0,n \rangle^{2}\bigr) \, \le \, \Sigma (N) \ts .
  \end{split}
\]
Putting everything together leads to the estimate
\[
   \Sigma (3N) \, \le \, \frac{319}{81}\, \Sigma (N) 
\]
which implies our claim. 
\end{proof}

\begin{remark}
Let us note that, without Lemma~\ref{lem:pos}, one would obtain 
\begin{equation}\label{eq:alt-estimate}
   \Sigma (3 N) \, \le \, \frac{319}{81}\, \Sigma (N) 
            +  \mathcal{O} (N) \ts ,
\end{equation}
which is weaker but still sufficient for the application of Wiener's
lemma later on. This is the type of relation that one can expect in
similar examples that fail to satisfy Lemma~\ref{lem:pos}.
\end{remark}

\begin{lemma}\label{lem:RL}
  Let\/ $\zeta \! : \, \ZZ^{2} \longrightarrow \RR$ be a function that
  satisfies the recursion relations \eqref{eq:eta-rec}, with\/ $\zeta
  (0,0) \ge 0$. Then, $\zeta$ is a positive definite function on\/
  $\ZZ^{2}$, and defines a unique positive measure\/ $\mu^{}_{\zeta}$
  on the\/ $2$-torus\/ $\TT^{2}$, via
\[
     \zeta (m,n) \, =  \int_{\TT^{2}} \ee^{2 \pi \ii k x} 
     \dd \mu^{}_{\zeta} (x) \ts ,
\]
  where\/ $\mu^{}_{\zeta} (\TT^{2}) = \zeta (0,0)$.
  Moreover, the measure\/ $\mu^{}_{\zeta}$ is absolutely continuous,
  relative to the Haar measure on\/ $\TT^{2}$, if and only if\/
  $\zeta (0,0) = 0$. In the latter case, $\zeta \equiv 0$.
\end{lemma}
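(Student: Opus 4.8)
The plan is to prove the three assertions in turn: positive definiteness and the associated measure, the total mass, and the absolute-continuity dichotomy. For the first part, I would argue that any $\zeta$ satisfying \eqref{eq:eta-rec} with $\zeta(0,0)\ge 0$ is a nonnegative scalar multiple of a limit of translation-bounded autocorrelations, hence positive definite; more concretely, since the recursions are linear, $\zeta$ is determined by $\zeta(0,0)$ together with the symmetry-extended relations (exactly as in Lemma~\ref{lem:rec}), so $\zeta = \zeta(0,0)\cdot\eta$, and $\eta$ is positive definite as established via \eqref{eq:mu-def}. Then the Herglotz–Bochner theorem (Lemma~\ref{lem:gen-Wiener} in the appendix) gives a unique finite positive measure $\mu^{}_{\zeta}$ on $\TT^{2}$ with the stated Fourier representation, and evaluating at $(m,n)=(0,0)$ yields $\mu^{}_{\zeta}(\TT^{2})=\zeta(0,0)$.

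For the dichotomy, the ``if'' direction is immediate: if $\zeta(0,0)=0$, then by the scaling above $\zeta\equiv 0$, so $\mu^{}_{\zeta}$ is the zero measure, which is trivially absolutely continuous with respect to Haar measure. The substance is the ``only if'' direction: I would show that if $\zeta(0,0)>0$ then $\mu^{}_{\zeta}$ cannot be absolutely continuous. By scaling it suffices to treat $\zeta=\eta$, i.e.\ to show the squiral measure $\mu$ itself is not absolutely continuous. The key tool is Wiener's lemma (the appendix version) applied to the $L^{2}$-norm growth: Lemma~\ref{lem:W-crit} gives $\Sigma(3N)\le \tfrac{319}{81}\,\Sigma(N)<4\,\Sigma(N)$, so iterating from $\Sigma(1)=\langle 0,0\rangle^{2}=1$ yields $\Sigma(N)=\mathcal{O}(N^{\beta})$ with $\beta=\log_{3}(319/81)<2$. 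This forces $\frac{1}{N^{2}}\sum_{m,n}\langle m,n\rangle^{2}\to 0$, which by Wiener's lemma on $\TT^{2}$ equals $\sum_{x\in\TT^{2}}\mu(\{x\})^{2}$; hence $\mu$ has no atoms. On the other hand, if $\mu$ were absolutely continuous with density $h\in L^{1}(\TT^{2})$, then the coefficients $\langle m,n\rangle$ would be Fourier coefficients of an $L^{1}$ function, so by the Riemann–Lebesgue lemma $\langle m,n\rangle\to 0$ as $|(m,n)|\to\infty$. But Lemma~\ref{lem:rec} gives $(-1)^{m+n}\langle m,n\rangle>0$ for all $m,n$, and more sharply the recursion $\langle 3m,3n\rangle=\langle m,n\rangle$ shows $\langle 3^{k},0\rangle=\langle 1,0\rangle=-\tfrac13$ for all $k$, a subsequence that does not tend to $0$. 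This contradiction shows $\mu$ is not absolutely continuous, hence $\mu^{}_{\zeta}$ is not absolutely continuous whenever $\zeta(0,0)>0$.

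I would then assemble these pieces: the measure $\mu^{}_{\zeta}$ is absolutely continuous precisely when $\zeta(0,0)=0$, and in that case $\zeta\equiv 0$ and $\mu^{}_{\zeta}=0$.

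The main obstacle is making the Wiener-lemma step on $\TT^{2}$ fully rigorous with the bound from Lemma~\ref{lem:W-crit}: one must check that the averaged sum $\frac{1}{N^{2}}\sum_{m,n=0}^{N-1}|\langle m,n\rangle|^{2}$ genuinely converges to the sum of squared point masses of $\mu$ (the two-dimensional analogue of the appendix's one-dimensional statement, applied over the square $\{0,\dots,N-1\}^{2}$ rather than a centred box), and that the $\mathcal{O}(N^{\beta})$ bound with $\beta<2$ therefore kills the atomic part. The Riemann–Lebesgue argument is routine once one has the non-vanishing subsequence $\langle 3^{k},0\rangle=-\tfrac13$, which is immediate from \eqref{eq:eta-rec}; so the real care goes into the $L^{2}$/Wiener estimate and its two-dimensional bookkeeping, for which Lemma~\ref{lem:pos} was precisely engineered to remove the error terms.
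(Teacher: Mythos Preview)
Your argument is correct in substance and, where it overlaps with the paper, uses the same ideas: the linearity of \eqref{eq:eta-rec} together with Lemma~\ref{lem:rec} forces $\zeta=\zeta(0,0)\cdot\eta$, giving positive definiteness and (via Herglotz--Bochner) the measure $\mu^{}_{\zeta}$ with the correct total mass; and the ``only if'' direction follows from Riemann--Lebesgue, since the relation $\langle 3m,3n\rangle=\langle m,n\rangle$ propagates nonzero values out to infinity.

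However, you have inserted an unnecessary detour. The entire Wiener-lemma block---invoking Lemma~\ref{lem:W-crit}, bounding $\Sigma(N)=\mathcal{O}(N^{\beta})$, and concluding that $\mu$ has no atoms---is irrelevant to the statement of this lemma, which concerns only absolute continuity. You then flag this as ``the main obstacle'' and worry about the two-dimensional bookkeeping of the averaged sums, but none of that is needed here: the Riemann--Lebesgue contradiction you give afterwards (via $\langle 3^{k},0\rangle=-\tfrac{1}{3}$) already suffices on its own to rule out absolute continuity when $\zeta(0,0)>0$. The paper's proof does exactly this, without ever mentioning atoms or Wiener's lemma at this stage; those tools enter only later, in the proof of the main theorem, where the continuity of $\mu$ is actually required. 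So your proof is correct but carries dead weight, and your assessment of where the difficulty lies is off.
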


\begin{proof}
  When $\zeta (0,0) = 1$, we have $\zeta(m,n) = \langle m,n \rangle$
  by Lemma~\ref{lem:rec}, which is positive definite by construction.
  Since the recursion \eqref{eq:eta-rec} is linear, $\zeta (0,0) = a
  \ge 0$ leads to $\zeta (m,n) = a\, \langle m,n \rangle$, which is
  still positive definite on $\ZZ^{2}$. The Herglotz-Bochner theorem
  \cite{Rudin} results in the representation via the unique positive
  measure $\mu^{}_{\zeta}$; compare Lemma~\ref{lem:gen-Wiener} below.
  It is a probability measure if and only if $\zeta (0,0) = 1$.

Whenever $a>0$, the special values $\zeta(m,n)$ with $-2 \le m,n \le
2$ are different from $0$, again by Lemma~\ref{lem:rec}, wherefore
$\zeta (3m,3n) = \zeta (m,n)$ transports them all the way to infinity.
By the Riemann-Lebesgue lemma \cite{P,Rudin}, this implies that
$\mu^{}_{\zeta}$ cannot be absolutely continuous relative to Lebesgue
measure (which is the Haar measure on $\TT^{2}$). The only exception
is $\zeta (0,0)=0$, which forces also all special values to vanish,
and hence $\zeta$ itself.
\end{proof}

\begin{theorem}
  The diffraction measure\/ $\widehat{\gamma} =
  \widehat{\gamma^{}_{\omega}}$ of the balanced Dirac comb\/ $\omega$
  of Eq.~\eqref{eq:def-comb} is a translation bounded, positive
  measure that is purely singular continuous. All elements of the hull
  of\/ $\omega$ possess the same autocorrelation and the same
  diffraction measure.
\end{theorem}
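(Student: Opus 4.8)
The plan is to reduce the statement to a study of the probability measure $\mu$ on $\TT^{2}$ introduced in Eq.~\eqref{eq:mu-def}. Since $\gamma = \eta\,\delta^{}_{\ZZ^{2}}$ is a lattice-supported, positive definite measure whose Herglotz--Bochner coefficients are $\eta(m,n) = \langle m,n\rangle$, its Fourier transform is the $\ZZ^{2}$-periodic measure $\widehat{\gamma} = \mu \ast \delta^{}_{\ZZ^{2}}$ with $\mu = \widehat{\gamma}\big|^{}_{\TT^{2}}$, in complete analogy with the one-dimensional statement used earlier via \cite{B}. This measure is positive because $\eta$ is positive definite, and it is translation bounded because it is $\ZZ^{2}$-periodic with mass $\mu(\TT^{2}) = \langle 0,0\rangle = 1$ per fundamental domain. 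The independence of $\gamma$, and hence of $\widehat{\gamma}$, from the chosen $w \in \XX$ was already noted above as a consequence of unique ergodicity, which settles the last sentence of the theorem. It thus remains to show that $\mu$ is purely singular continuous.

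For the absence of a pure point part, I would invoke Wiener's lemma in the form of Lemma~\ref{lem:gen-Wiener}, which gives $\sum_{t \in \TT^{2}} \mu(\{t\})^{2} = \lim_{N\to\infty} (2N+1)^{-2} \sum_{|m|,|n| \le N} \langle m,n\rangle^{2}$. By the $D_{4}$-symmetry \eqref{eq:eta-sym}, the sum on the right is at most $4\,\Sigma(N+1)$, and iterating Lemma~\ref{lem:W-crit} yields $\Sigma(3^{k}) \le (319/81)^{k}$, whence $\Sigma(N) = \mathcal{O}\bigl(N^{\log_{3}(319/81)}\bigr)$ with $\log_{3}(319/81) < 2$. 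The average therefore tends to $0$, so $\mu$ has no atoms.

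The decisive step, where care is needed, is the absence of an absolutely continuous component; note that Lemma~\ref{lem:RL}, applied to $\langle\,\cdot\,,\cdot\,\rangle$ itself, only shows that $\mu$ fails to be absolutely continuous, which does not yet exclude a nonzero absolutely continuous part. I would therefore write the Lebesgue decomposition $\mu = \mu^{}_{\mathrm{ac}} + \mu^{}_{\mathrm{sc}}$ (the pure point part being already excluded) and show that each summand again obeys the recursions \eqref{eq:eta-rec}. The key observation is that the map $(\langle m,n\rangle) \mapsto (\langle 3m+r,3n+s\rangle)$ encoded in \eqref{eq:eta-rec-2} is realised, at the level of measures on $\TT^{2}$, as the fibre-uniform pull-back $\nu \mapsto \nu^{(3)}$ along the covering map $(x,y)\mapsto (3x,3y)$, characterised by $\widehat{\nu^{(3)}}(m,n) = \widehat{\nu}(m/3,n/3)$ if $3\mid m$ and $3\mid n$ and $0$ otherwise, followed by multiplication by the fixed trigonometric polynomial $\vartheta$ that is the planar analogue of the function $\theta$ in \eqref{eq:fn}. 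The pull-back along a covering map and multiplication by a continuous function each respect the Lebesgue decomposition and create no atoms, so, since $\mu$ is a fixed point of this operation, uniqueness of the Lebesgue decomposition forces $\mu^{}_{\mathrm{ac}}$ to be a fixed point as well. Hence $\zeta := \widehat{\mu^{}_{\mathrm{ac}}}$ satisfies \eqref{eq:eta-rec} with $\zeta(0,0) = \mu^{}_{\mathrm{ac}}(\TT^{2}) \ge 0$; but $\mu^{}_{\mathrm{ac}}$ is absolutely continuous by construction, so Lemma~\ref{lem:RL} forces $\zeta(0,0) = 0$ and therefore $\zeta \equiv 0$, i.e.\ $\mu^{}_{\mathrm{ac}} = 0$. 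Together with the absence of atoms this shows that $\mu$, and thus $\widehat{\gamma}$, is purely singular continuous. The anticipated obstacle is precisely the identification of the inflation-induced operation on measures with ``pull back by $3$, then multiply by $\vartheta$'' and the check that it preserves the Lebesgue type; once that is available, the theorem follows from Lemmas~\ref{lem:W-crit} and \ref{lem:RL}.
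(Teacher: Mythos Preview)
Your proof is correct and matches the paper's route: $\widehat{\gamma}=\mu*\delta^{}_{\ZZ^{2}}$, then Wiener's lemma with Lemma~\ref{lem:W-crit} for the absence of atoms, then the Lebesgue decomposition together with Lemma~\ref{lem:RL} for the absence of an absolutely continuous part, with the hull statement coming from unique ergodicity. On the step you flag as the obstacle, the paper is terser --- it simply asserts that the mutual orthogonality of $\mu^{}_{\mathsf{ac}}$ and $\mu^{}_{\mathsf{sc}}$ forces $\eta^{}_{\mathsf{ac}}$ and $\eta^{}_{\mathsf{sc}}$ to satisfy \eqref{eq:eta-rec} separately --- whereas your explicit identification of the recursion as the measure-level map $\nu\mapsto\vartheta\cdot\nu^{(3)}$ (precisely the Riesz-product structure recorded later in Eq.~\eqref{eq:F-rel}) and your check that pull-back along $(x,y)\mapsto(3x,3y)$ and multiplication by a continuous $\vartheta$ each preserve Lebesgue type give a more transparent justification of the same fact.
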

\begin{proof}
  The diffraction measure is $\widehat{\gamma} = \mu *
  \delta^{}_{\ZZ^2}$, by an application of \cite[Thm.~1]{B},
  where $\mu$ is the positive measure from
  Eq.~\eqref{eq:mu-def}.  From Lemma~\ref{lem:W-crit}, we know that $
  \frac{1}{N^2}\Sigma (N) \longrightarrow 0$ as $N\to\infty$, so that
  Wiener's lemma (see Lemma~\ref{lem:gen-Wiener} in the appendix)
  implies that $\mu$ is continuous.

  We may now employ the unique decomposition $\mu =
  \mu^{}_{\mathsf{sc}} + \mu^{}_{\mathsf{ac}}$ as a sum of
  non-negative measures, relative to Lebesgue
  measure on $\TT^{2}$.  Defining the functions
  $\eta^{}_{\mathsf{ac}}$ and $\eta^{}_{\mathsf{sc}}$ as the inverse
  Fourier transforms of $\mu^{}_{\mathsf{ac}}$ and
  $\mu^{}_{\mathsf{sc}}$, in analogy to Eq.~\eqref{eq:mu-def},
  one finds 
\[
    \eta^{}_{\mathsf{ac}} (m,n) +
    \eta^{}_{\mathsf{sc}} (m,n) \, = \, 
    \langle m,n \rangle
\]
for all $(m,n) \in \ZZ^{2}$, together with $\eta^{}_{\mathsf{ac}}
(0,0) \ge 0$ and $\eta^{}_{\mathsf{sc}} (0,0) \ge 0$. Since
$\mu^{}_{\mathsf{ac}}$ and $\mu^{}_{\mathsf{sc}}$ are mutually
orthogonal in the measure sense, it is clear that both
$\eta^{}_{\mathsf{ac}}$ and $\eta^{}_{\mathsf{sc}}$ satisfy the same
set of recursions, namely those of Eq.~\eqref{eq:eta-rec}, but
possibly with different initial conditions.  By an application of
Lemma~\ref{lem:RL}, we see that $\eta^{}_{\mathsf{ac}}$ must vanish,
so that $\langle m,n \rangle = \eta^{}_{\mathsf{sc}} (m,n)$ on
$\ZZ^{2}$. This implies $\mu$, and hence $\widehat{\gamma}$, to be
purely singular continuous.

The hull of $\omega$, which is its orbit closure in the vague
topology, is strictly ergodic. Consequently, the autocorrelation
coefficients of each Dirac comb in this hull are the ones specified by
Eq.~\eqref{eq:eta-rec} and Lemma~\ref{lem:rec}, which implies the last
claim.
\end{proof}

To further investigate the probability measure $\mu$, and then also
the diffraction measure $\widehat{\gamma} = \mu *
\delta^{}_{\ZZ^{2}}$, let us define a multi-dimensional distribution
function $F$, compare \cite[\S 23]{Gne}, as $F(x,y) := \mu
\bigl( [0,x) \times [0,y) \bigr)$, which is continuous from the left
and from below by construction; see \cite[Sec.~I.6]{Bauer} or
\cite[Sec.~12]{Bill} for an alternative, equivalent approach. Note that
$F$ is non-decreasing along any line parallel to the $x$-axis or the
$y$-axis. The (continuous) measure $\nu$ from
Proposition~\ref{prop:1D} is the marginal of $\mu$ in the sense that
\begin{equation}\label{eq:marginal}
   \nu \bigl( [0,x) \bigr) \, = \,
   \mu \bigl( [0,x) \times [0,1) \bigr)
   \quad \text{and} \quad 
   \mu \bigl( [0,1) \times [0,y) \bigr)
   \, = \,  \nu \bigl( [0,y) \bigr)
\end{equation}
holds for all $x,y \in [0,1)$, where the second relation follows
by the symmetry of our system. Since the distribution function of
$\nu$ is continuous by Proposition~\ref{prop:1D}, an application
of \cite[Thm.~2.3]{Tucker} shows that $F$ is a continuous function.
This is one of the somewhat subtle points to observe when using
higher-dimensional distribution functions.

To continue, it is advantageous to extend $F$ to a continuous function
on $\RR^{2}$, which is most easily done via $F(x,y) = \widehat{\gamma}
\bigl( [0,x) \times [0,y) \bigr)$ for $x,y \ge 0$. Here, $F$ is
clearly continuous, by an extension of the above argument across 
the lines $\{ x = m\}$ and $\{ y = n\}$, where $F(m,y)=\varPhi(y)$
and $F(x,n) = \varPhi(x)$. In particular, we thus have
\[
    F (x,y) \, = \, \widehat{\gamma} 
    \bigl( [0,x] \times [0,y] \bigr)
\]
for $x,y \ge 0$. This can now consistently be extended to all of
$\RR^{2}$ by setting $F(-x,y) = F(x,-y) = - F(x,y)$ and hence
$F(-x,-y) = F(x,y)$.  In particular, one has $F(0,0)=0$ as well as
$F(0,y) = F(x,0)=0$, and $F$ is continuous on $\RR^{2}$.

\begin{lemma}\label{lem:F-prop}
  Let\/ $F$ be the continuous distribution function defined by\/
  $F(x,y) = \widehat{\gamma} \bigl([0,x]\times [0,y]\bigr)$
  for\/ $x,y \ge 0$  together with\/ $F(-x,y) =
  F(x,-y) = - F(x,y)$. Then, $F$ satisfies
\[
   F(x+1,y) \, = \, F(x,y) + \varPhi (y)
   \quad \text{and} \quad
   F(x,y+1) \, = \, F(x,y) + \varPhi (x)
\]
for arbitrary\/ $x,y \in \RR$, where\/ $\varPhi$ is the function from
Eq.~\eqref{eq:F} and Proposition~$\ref{prop:1D}$.  
The function\/ $h$ defined by\/ $ h(x,y) = F(x,y) - x\ts \varPhi(y) -
y\ts \varPhi(x) + xy$ is continuous and\/ $\ZZ^{2}$-periodic.
\end{lemma}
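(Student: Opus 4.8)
The plan is to establish the two functional equations for $F$ first, and then read off the periodicity of $h$ as an immediate algebraic consequence. For the functional equations, I would work with the extension $F(x,y) = \widehat{\gamma}\bigl([0,x]\times[0,y]\bigr)$ for $x,y\ge 0$, where $\widehat{\gamma} = \mu * \delta^{}_{\ZZ^{2}}$. The key observation is that, for $x\ge 0$, the strip $[0,x+1]\times[0,y]$ decomposes as $\bigl([0,x]\times[0,y]\bigr) \cup \bigl([x,x+1]\times[0,y]\bigr)$ up to a set of $\widehat{\gamma}$-measure zero (a vertical line, which is null because $\widehat{\gamma}$ is continuous by the preceding theorem). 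Hence $F(x+1,y) - F(x,y) = \widehat{\gamma}\bigl([x,x+1]\times[0,y]\bigr)$. Since $\widehat{\gamma} = \mu * \delta^{}_{\ZZ^{2}}$ is periodic under integer translations in the first coordinate, this equals $\widehat{\gamma}\bigl([0,1]\times[0,y]\bigr)$, which by the marginal relation \eqref{eq:marginal}, together with $\widehat{\gamma}\restriction_{[0,1)^{2}} = \mu$ and the continuity of all measures involved, is exactly $\mu\bigl([0,1)\times[0,y)\bigr) = \nu\bigl([0,y)\bigr) = \varPhi(y)$. This proves $F(x+1,y) = F(x,y) + \varPhi(y)$ for $x,y\ge 0$; the second identity follows by the $D_{4}$-symmetry (swap the roles of the two coordinates). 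One then checks that both identities persist under the sign extension $F(-x,y) = F(x,-y) = -F(x,y)$: for instance, for $x<0$ one writes $x+1$ in terms of $-x-1$ and $-x$ and uses $\varPhi(-y) = -\varPhi(y)$ together with the already-established positive-quadrant relation, which is a short case check on the sign of $x+1$.

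With the functional equations in hand, the periodicity of $h(x,y) = F(x,y) - x\ts\varPhi(y) - y\ts\varPhi(x) + xy$ is a direct computation. Using $\varPhi(x+1) = \varPhi(x) + 1$ (this is how $\varPhi$ was extended to $\RR$ in Section~\ref{sec:1D}), one gets
\[
  h(x+1,y) = F(x,y) + \varPhi(y) - (x+1)\varPhi(y) - y\bigl(\varPhi(x)+1\bigr) + (x+1)y,
\]
and the terms $\varPhi(y)$, $-\varPhi(y)$, $-y$, $+y$ cancel, leaving $h(x+1,y) = h(x,y)$. The same cancellation with the roles of $x$ and $y$ exchanged gives $h(x,y+1) = h(x,y)$, so $h$ is $\ZZ^{2}$-periodic. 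Continuity of $h$ is inherited from the continuity of $F$ (established just before the lemma via the marginal argument and \cite[Thm.~2.3]{Tucker}) and the continuity of $\varPhi$ (Proposition~\ref{prop:1D}), since $h$ is a finite combination of continuous functions.

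The only point requiring a little care — the mild obstacle — is the bookkeeping of null sets and the sign extension. One must be sure that replacing closed boxes by half-open ones introduces no error, which rests squarely on $\widehat{\gamma}$ having no atoms and, more specifically, no mass on any affine line $\{x = c\}$ or $\{y = c\}$; this is guaranteed because $\widehat{\gamma} = \mu * \delta^{}_{\ZZ^{2}}$ with $\mu$ continuous, and the one-dimensional marginals $\nu$ are continuous by Proposition~\ref{prop:1D}, so the line $\{x = c\}$ carries $\widehat{\gamma}$-mass at most that of a point under $\nu$, namely zero. Once this is noted, everything else is routine algebra, and the extension to negative arguments is just a finite sign check.
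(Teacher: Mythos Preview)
Your proof is correct and follows essentially the same route as the paper: decompose the box, use $\ZZ^{2}$-periodicity of $\widehat{\gamma}$ to reduce to a fundamental strip, invoke the marginal relation \eqref{eq:marginal} to recognise $\varPhi$, and then verify the periodicity of $h$ by direct cancellation using $\varPhi(x+1)=\varPhi(x)+1$. The one place where you are slightly quicker than the paper is the assertion that periodicity gives $\widehat{\gamma}\bigl([x,x+1]\times[0,y]\bigr)=\widehat{\gamma}\bigl([0,1]\times[0,y]\bigr)$ for \emph{non-integer} $x$: integer periodicity alone only takes you to $[\{x\},\{x\}+1]$, and one then needs the split $[\{x\},1]\cup(1,\{x\}+1]$ together with continuity of $\widehat{\gamma}$ to land on $[0,1]$ --- exactly the two-step manoeuvre the paper writes out explicitly.
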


\begin{proof}
  Observe first that Eq.~\eqref{eq:marginal} has a natural extension
  the periodic measures $\nu * \delta^{}_{\ZZ}$ and
  $\widehat{\gamma} = \mu * \delta^{}_{\ZZ^{2}}$.

  Now, for $x,y \ge 0$, one has
\[
    F(x,y+1) - F(x,y)\, = 
    \int_{0}^{x}\! \int_{y}^{y+1}\! \dd \widehat{\gamma} \, =  
    \int_{0}^{x}\! \int_{ \{ y \} }^{ \{ y \}+1}\! \dd \widehat{\gamma} \, =  
    \int_{0}^{x}\! \int_{0}^{1}\! \dd \widehat{\gamma} \, =  
    \int_{0}^{x}\! \dd (\nu * \delta^{}_{\ZZ}) \, = \, \varPhi (x)\ts ,
\] 
  where $\{ x\}$ denotes the fractional part of $x$, and the
  $1$-periodicity of $\widehat{\gamma}$ (in $y$) was used twice,
  in the second step via $[ \{ y \}, \{ y \} +1 ] = 
  [ \{ y \}, 1 ] \cup ( 1, \{ y \} + 1]$ together with the
  fact that $\widehat{\gamma}$ is a continuous measure.
  The other identity follows analogously, and the extension to
  all of $\RR^{2}$ is clear by symmetry.

  The final claim now follows from a simple calculation.
\end{proof}

Due to the underlying symmetry, $h$ has a Fourier series of the form
\[
   h (x,y) \, =  \sum_{m,n=1}^{\infty} a^{}_{m,n}
   \sin (2 \pi m x) \sin (2 \pi n y) \ts ,
\]
where $a^{}_{m,n} = \frac{\langle m,n \rangle}{\pi^{2} mn}$ follows
from a routine calculation; see \cite{AAP} for
background on multiple Fourier series. 
Together with Eq.~\eqref{eq:F}, this leads
to the series representation
\begin{equation}\label{eq:F-series}
\begin{split}
   F(x,y) \, =&\, - xy + x \ts \varPhi(y) \ts + y\ts \varPhi(x)
   \, + \! \sum_{m,n=1}^{\infty} 
   \frac{\langle m,n\rangle }{\pi^2 \ts mn}
   \sin (2 \pi m x) \sin (2 \pi n y) \\
   = & \;\, xy + x \sum_{n=1}^{\infty} 
        \frac{\langle 0,n\rangle }{\pi\ts n}
   \sin (2 \pi n y) + y \sum_{m=1}^{\infty} 
        \frac{\langle m,0\rangle }{\pi\ts m}
   \sin (2 \pi m x)\\ &\; + \sum_{m,n=1}^{\infty} 
   \frac{\langle m,n\rangle }{\pi^2 \ts mn}
   \sin (2 \pi m x) \sin (2 \pi n y)\ts ,
\end{split}
\end{equation}
which is actually uniformly convergent (for summation over
square-shaped regions). In line with Eq.~\eqref{eq:F}, one can also
rewrite the distribution function as
\[
    F(x,y) \, = \, xy \sum_{m,n\in\ZZ} \langle m,n \rangle\,
    \mathrm{sinc} (2 \pi m x)\, \mathrm{sinc} (2 \pi n y) \ts ,
\]
which highlights the structure as an integral over a planar
Fourier-Stieltjes series.

It is important to observe that $F$ is well-defined once the measure
$\mu$ is given, and that $F$ specifies the measure uniquely;
compare \cite[Thm.~12.5]{Bill} for details. We can thus employ
the Lebesgue-Stieltjes approach to measures also in this more
general situation.

For actual calculations, it is advantageous to use an approximation to
$F$ via a sequence of distribution functions with densities, pretty
much as in the Thue-Morse example. Indeed, the recursion relations
\eqref{eq:eta-rec}, via an explicit but somewhat tedious calculation,
implies the functional relation
\begin{equation}\label{eq:F-rel}
   F (x,y)  \, = \, \frac{1}{9} \int_{0}^{3x}
      \int_{0}^{3y} \vartheta \bigl( \frac{x}{3}, \frac{y}{3}\bigr)
      \dd F (x,y)
\end{equation}
(written in Lebesgue-Stieltjes notation) with the kernel function
\begin{equation}\label{eq:vartheta-def}
   \vartheta (x,y) \, = \,  \frac{1}{9} \bigl(1
   + 2 \cos (2\pi\ts x) + 2 \cos (2\pi\ts y)
   - 2 \cos (2\pi (x \! + \! y)) 
   - 2 \cos (2\pi (x \! - \! y))\bigr)^{2}\ts ,
\end{equation}
which is shown in Figure~\ref{fig:theta}.  Clearly, $\vartheta$ is a
non-negative, $\ZZ^{2}$-periodic function that is symmetric in both
arguments. Moreover, it satisfies the relations $\vartheta (1 \! - \!
x,y) = \vartheta (x,y) = \vartheta (x,1 \! - \! y)=\vartheta(y,x)$
together with the normalisation $\int_{[0,1]^{2}} \vartheta(x,y) \dd x
\dd y = 1$. A general formula for $\vartheta$ will be discussed below
in Eq.~\eqref{eq:gen-theta}.
 
\begin{figure}
\begin{center}
  \includegraphics[width=0.6\textwidth]{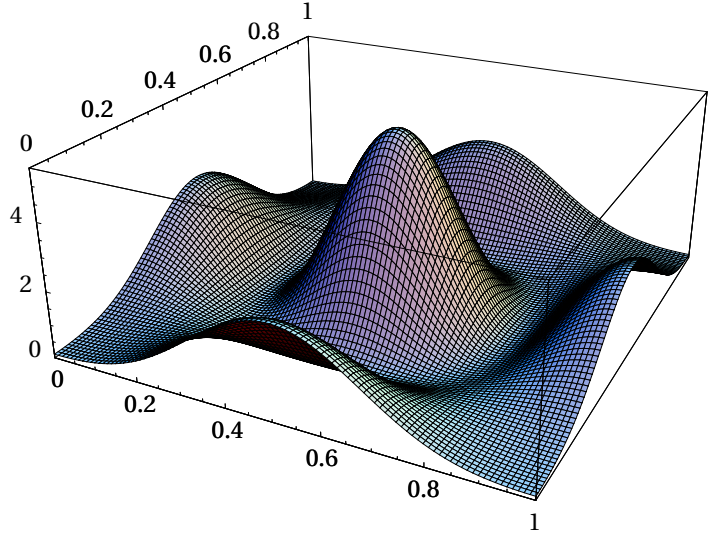}
\end{center}
\caption{\label{fig:theta} The function $\vartheta$ of
  Eq.~\eqref{eq:vartheta-def} on $[0,1]^{2}$.}
\end{figure}

The functional relation \eqref{eq:F-rel} can now be employed to define
an iterative calculation of $F$ as follows. One starts from $F^{(0)}
(x,y) = xy$ (which corresponds to the measure $\dd F^{(0)} = \lambda$)
and continues with the iteration
\begin{equation}\label{eq:F-iter}
   F^{(N+1)} (x,y) \, = \, \frac{1}{9} \int_{0}^{3x}
      \int_{0}^{3y} \vartheta \bigl( \frac{x}{3}, \frac{y}{3}\bigr)
      \dd F^{(N)} (x,y)\ts .
\end{equation}
The functions $F^{(N)}$ have the form
\[
\begin{split}
    F^{(N)} (x,y) \, =& \;\, x\ts y + x \sum_{n=1}^{3^{N}-1} 
   \frac{\beta^{(N)}_{0,n}}{\pi\ts n} \sin (2 \pi n y) + 
   y \sum_{m=1}^{3^{N}-1} \frac{\beta^{(N)}_{m,0}}{\pi\ts m}
   \sin (2 \pi m x) \\ & \; + \sum_{m,n=1}^{3^{N}-1} 
   \frac{\beta^{(N)}_{m,n}}{\pi^2\ts mn}
   \sin (2 \pi m x) \sin (2 \pi n y)\ts ,
\end{split}
\]
where the coefficients $\beta^{(N)}_{m,n}$ are defined for $N,m,n\ge
0$ by the initial conditions $\beta^{(0)}_{m,n}=\delta^{}_{m,0}
\delta^{}_{n,0}$ together with the recursion 
\[
    \beta^{(N+1)}_{3m + r, 3n +s} \, =  
   \sum_{k = 0}^{\min(1,r)} \sum_{\ell=0}^{\min(1,s)}
   \alpha^{(r,s)}_{k,\ell}\ts \beta^{(N)}_{m+k,n+\ell} \ts ,
\]
with $N\ge 0$ and the same coefficients $\alpha^{(r,s)}_{k,\ell}$ as
in Eq.~\eqref{eq:eta-rec-2}. This can once again be verified by a
direct calculation.

All $F^{(N)}$ represent absolutely continuous measures, so that we can
define Radon-Nikodym densities via $\dd F^{(N)} (x,y) =
f^{(N)} (x,y) \dd x \dd y$, where 
\[
   f^{(N)} (x,y) \, = \, \frac{\partial^{2}}
   {\partial x \ts \partial y} F^{(N)}(x,y) \ts .
\]
This gives $f^{(1)} = \vartheta$, with the function $ \vartheta $ from
Eq.~\eqref{eq:vartheta-def}, via the application of some trigonometric
identities. The iteration now reveals the Riesz product formula
\begin{equation}\label{eq:riesz-2}
   f^{(N)} (x,y) \, = \prod_{\ell=0}^{N-1} \vartheta (3^{\ell}x,3^{\ell}y) \ts ,
\end{equation}
which highlights the deeper role of $\vartheta$ in
Eq.~\eqref{eq:F-iter}. The distribution function $F^{(3)}$ and the
corresponding density $f^{(3)}$ are shown in
Figure~\ref{fig:distdens}.

\begin{figure}
\begin{center}
  \includegraphics[width=0.45\textwidth]{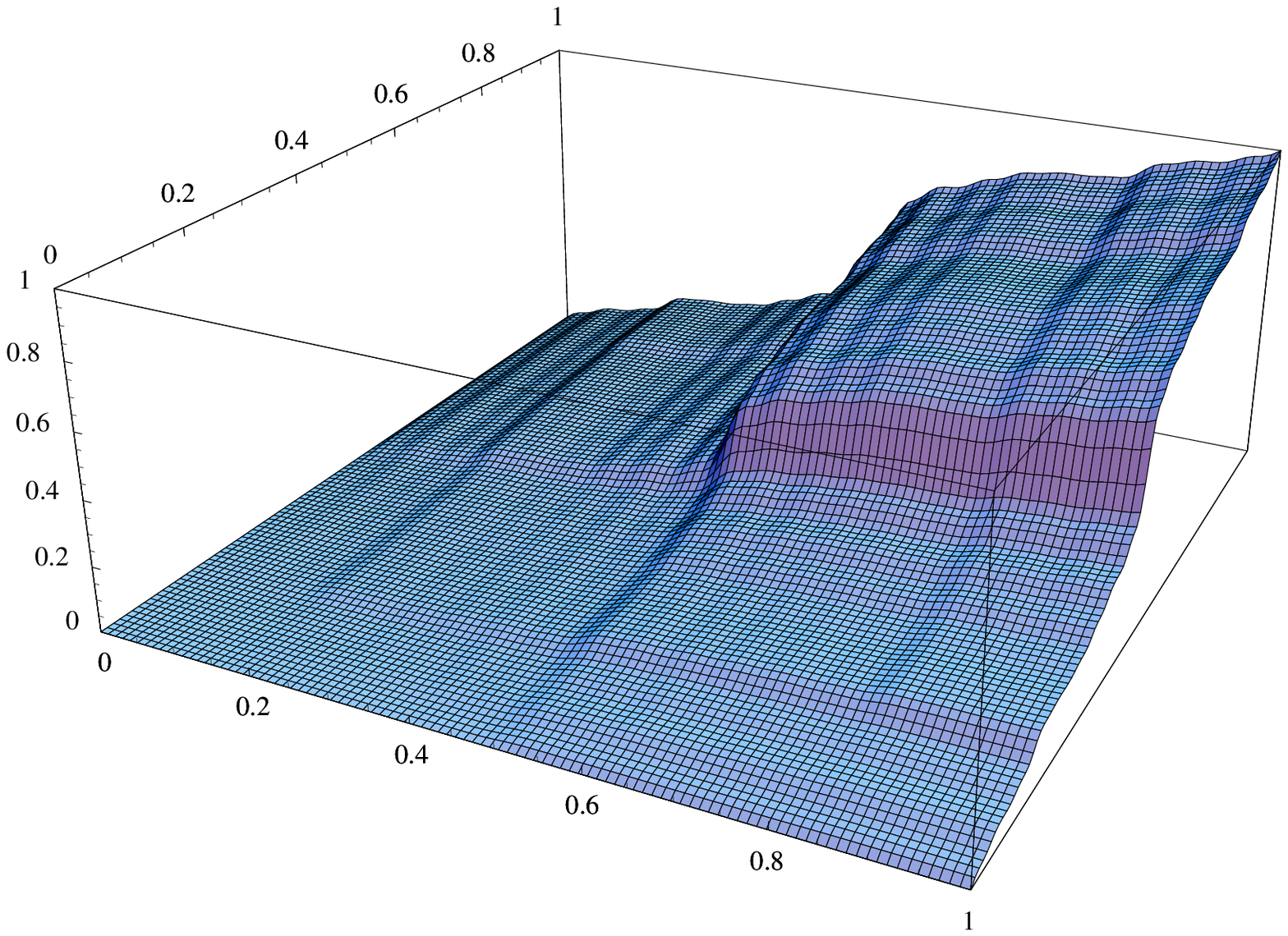}
  \includegraphics[width=0.45\textwidth]{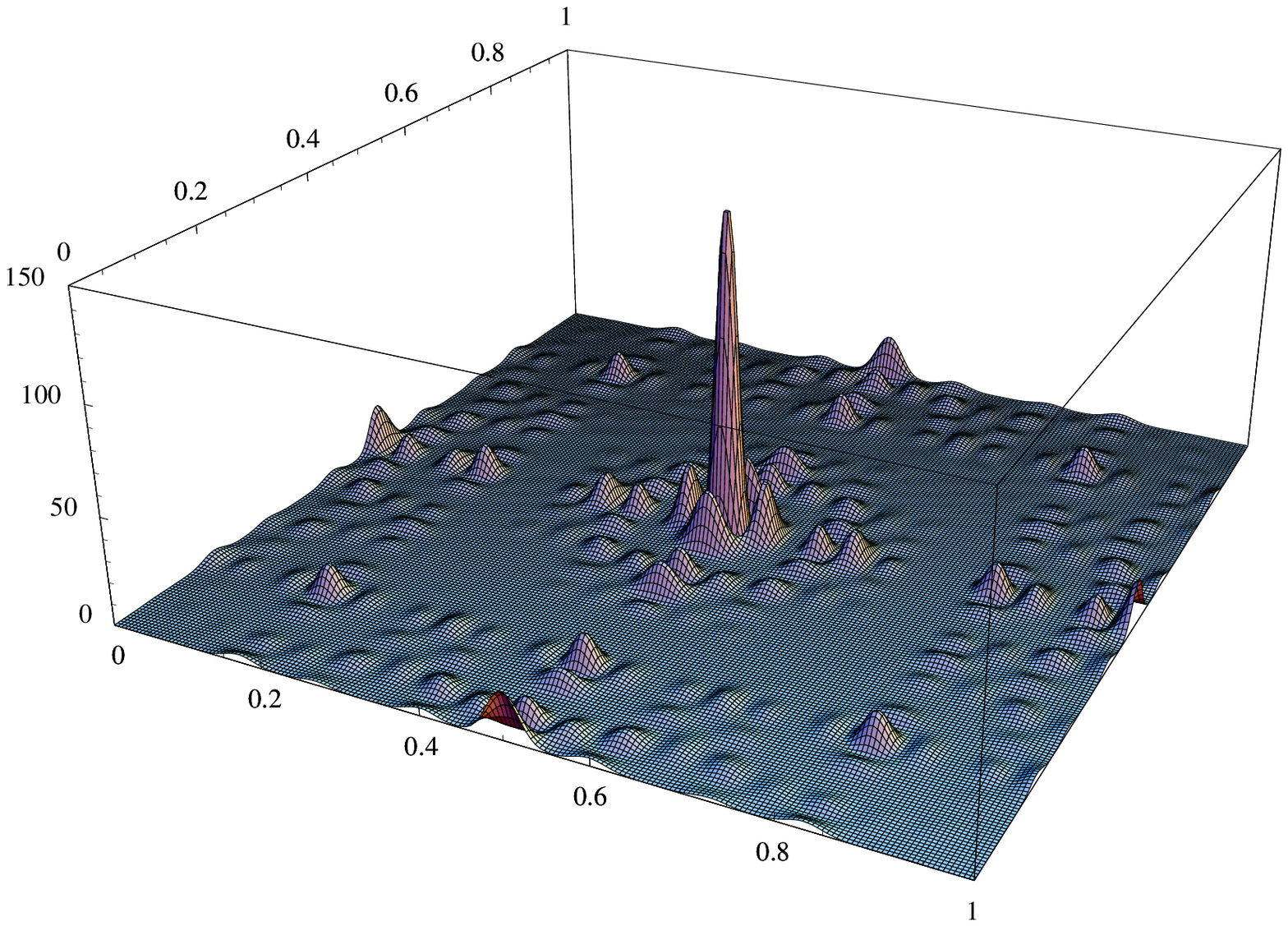}
\end{center}
\caption{\label{fig:distdens} The distribution function $F^{(3)}$ 
  of Eq.~\eqref{eq:F-iter} (left) and the corresponding density $f^{(3)}$ 
  of Eq.~\eqref{eq:riesz-2} (right), approximating
  the diffraction measure of the squiral tiling (in the version
  of Figure~\ref{fig:blocktil})  on $[0,1]^{2}$.}
\end{figure}

\begin{remark}
  The measure defined by the continuous function $F$ is represented by
  the infinite Riesz product $\prod_{\ell=0}^{\infty} \vartheta
  (3^{\ell} x, 3^{\ell} y)$, to be understood with convergence in the
  vague topology. By a comparison of Eqs.~\eqref{eq:F-rel} and
  \eqref{eq:F-iter}, it is clear that $F$ is a fixed point of the
  latter. In fact, within the class of distribution functions with
  certain continuity and additivity constraints, it is the only fixed
  point, with uniform (but not absolute) convergence towards it from
  the initial condition $F^{(0)}$, again by arguments analogous to
  \cite[Thm.~30.13]{Bauer}.
\end{remark}

\section{A topological factor with maximal pure point spectrum}

It is well known that the one-dimensional Thue-Morse system admits a
topological factor with maximal pure point spectrum, which can be
defined by the period doubling substitution. In fact, the latter is
induced by a simple sliding block map of width $2$. The corresponding
factor map is globally $2$-to-$1$ between the hulls; see \cite{BGG}
for details and an extension to generalised Thue-Morse sequences. 
Amazingly, a
similar approach also works for the squiral tiling.

Define the mapping $\psi\! :\, \{1,\bar{1}\}^{\ZZ^{2}} \longrightarrow
\{1,\bar{1}\}^{\ZZ^{2}}$ by $w\mapsto\psi w$ with 
\[
   (\psi w)_{m,n} \, = \, w_{m,n}w_{m+1,n}w_{m,n+1}w_{m+1,n+1}\ts , 
\]
which is continuous. It is clear that $\YY:=\psi\XX$ defines a factor
for the action of $\ZZ^{2}$. Moreover, the inflation rule
\eqref{eq:sqinfl} induces a new inflation on $\YY$, which reads
\begin{equation}\label{eq:kroninfl}
   \begin{matrix}
     & & 1 & 1 & a \\
     & & \bar{1} & \bar{1} & 1 \\
     a & \longmapsto & \bar{1} & \bar{1} & 1 
     \end{matrix}
\end{equation}
with $a\in\{1,\bar{1}\}$. Note that the arrangement matches that of
the inflation rule \eqref{eq:sqinfl}. One checks consistency by
starting from the $14$ legal patches with $2\times 2$ squares, and
verifies that they inflate to larger patches that produce the correct
$3\times 3$ blocks in the lower left corner under $\psi$. One can
check that the block map is \emph{not} globally $2$-to-$1$, though it
is $2$-to-$1$ almost everywhere. For a more detailed discussion and a
classification of the substitution factors of the squiral, we refer to
\cite{BGG2}.

Consider one of the two possible fixed points of the induced
substitution \eqref{eq:kroninfl} and decompose
$\ZZ^{2}=\vL_{+}\dotcup\vL_{-}$, where
$\vL_{\pm}=\{(m,n)\in\ZZ^{2}\mid w_{m,n}=\pm 1\}$. Using
$e^{}_{1}=(1,0)$ and $e^{}_{2}=(0,1)$, the fixed point property now
induces the set-valued relations
\begin{equation}
   \vL_{\pm} \, = \, \bigl(3\ZZ^{2}+S_{\pm}\bigr) \dotcup \,
   \bigl(3\vL_{\pm}+2(e^{}_{1}\!+\!e^{}_{2})\bigr)
\end{equation}
with the translation sets $S_{+}=\{2e^{}_{1}, 2e^{}_{2},
2e^{}_{1}\!+\!e^{}_{2}, e^{}_{1}\!+\!2e^{}_{2}\}$ and $S_{-}=\{0, e^{}_{1},
e^{}_{2}, e^{}_{1}\!+\!e^{}_{2}\}$. 

The solutions are
\[
   \vL_{\pm} \, = \, 
    \bigdotcup_{n\ge 1} \bigl( 3^n \ZZ^{2} + 3^{n-1} S_{\pm}
    + (3^{n-1}-1) (e^{}_{1}\!+\!e^{}_{2})\bigr)
    \;\dotcup\, A_{\pm}
\]
where either $A_{+}=\{-(e^{}_{1}\!+\!e^{}_{2})\}$ and
$A_{-}=\varnothing$, or vice versa. The special role of the point
$(-1,-1)$ becomes transparent in the topology of the $3$-adic numbers.

The sets $\vL_{\pm}$ are model sets with two copies of the $3$-adic
numbers as internal space. The Dirac combs $\delta^{}_{\!\vL_{\pm}}$
are thus pure point diffractive by the model set theorem; see
\cite[Thm.~4.5]{M} or \cite[Thm.~2]{BM}. A standard calculation
now reveals that the corresponding Fourier module is
$\ZZ[\frac{1}{3}]\times \ZZ[\frac{1}{3}]$. By general arguments
\cite{LMS,BL}, this equals the dynamical spectrum of $\YY$.

By the results of \cite{Nat1}, the pure point part of the dynamical
spectrum of $\XX$ thus equals the dynamical spectrum of $\YY$, which
shows that $\YY$ is indeed a topological factor that exhausts the pure
point spectrum of the squiral tiling. To summarise, we have shown
the following.

\begin{theorem}
  The squiral tiling dynamical system has a dynamical spectrum of
  mixed type, with pure point part\/ $\ZZ[\frac{1}{3}] \times
  \ZZ[\frac{1}{3}]$ and an additional singular continuous part.  The
  latter is exploited by the diffraction measure of the balanced
  Dirac comb of Eq.~\eqref{eq:def-comb}, while the former can be
  recovered from the pure point diffraction spectrum of the factor
  system defined via Eq.~\eqref{eq:kroninfl}.  \qed
\end{theorem}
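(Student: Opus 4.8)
The plan is to assemble the claim from the structural results established above. As usual, the dynamical spectrum of $\XX$ splits into a pure point part (the group of eigenvalues, realised via the factor map onto the maximal equicontinuous factor) and a continuous part, and I would treat the two contributions separately.

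For the pure point part, I would work with the topological factor $\YY = \psi\XX$ and its induced substitution \eqref{eq:kroninfl}. The key step is to verify that, for a fixed point of \eqref{eq:kroninfl}, the level sets $\vL_{\pm}$ are precisely the model sets exhibited above, with internal space given by two copies of the $3$-adic integers; this reduces to solving the set-valued fixed-point equations for $\vL_{\pm}$ and recognising the solutions as cut-and-project sets, the exceptional point $(-1,-1)$ being absorbed naturally in the $3$-adic topology. Granting this, the Dirac combs $\delta_{\vL_{\pm}}$ are pure point diffractive by the model set theorem, the associated Fourier module computes to $\ZZ[\tfrac{1}{3}] \times \ZZ[\tfrac{1}{3}]$, and the equivalence of pure point diffraction and pure point dynamical spectrum \cite{LMS,BL} identifies this with the dynamical spectrum of $\YY$. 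Finally, the results of \cite{Nat1} guarantee that $\YY$ is the maximal pure point factor of $\XX$, so that the pure point part of the dynamical spectrum of $\XX$ is exactly $\ZZ[\tfrac{1}{3}] \times \ZZ[\tfrac{1}{3}]$.

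For the continuous part, I would invoke the Theorem on $\widehat{\gamma}$ above: the balanced Dirac comb $\omega$ of Eq.~\eqref{eq:def-comb}, built from the $\{1,\bar{1}\}$-valued configuration, has autocorrelation $\gamma=\eta\,\delta^{}_{\ZZ^2}$ with diffraction $\widehat{\gamma}=\mu*\delta^{}_{\ZZ^2}$ purely singular continuous, where $\mu$ is the non-trivial probability measure of Eq.~\eqref{eq:mu-def}. Since the diffraction spectrum of any square-integrable observable embeds into the dynamical spectrum, and since $\mu$ is continuous and non-zero, this forces a non-trivial singular continuous component in the dynamical spectrum of $\XX$. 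Combined with the pure point part above, which is non-trivial and, being the spectrum of the maximal equicontinuous factor, cannot absorb the singular continuous contribution, the dynamical spectrum is genuinely of mixed type, as claimed. The explicit infinite Riesz product $\prod_{\ell\ge 0}\vartheta(3^{\ell}x,3^{\ell}y)$ then furnishes a concrete description of the singular continuous part.

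The main obstacle, I expect, is the bookkeeping that links diffraction and dynamical spectra: one must be careful that the singular continuous diffraction of the \emph{balanced} comb, rather than of a $\{0,1\}$-valued comb, genuinely witnesses a component of the dynamical spectrum of $\XX$ itself, and that this component is disjoint from, hence additional to, the pure point part coming from $\YY$. This is where one leans on the general spectral theory of \cite{LMS,BL} together with the factor structure of \cite{Nat1}; the model set verification for $\vL_{\pm}$, while essentially routine, is the other place where a little care is needed.
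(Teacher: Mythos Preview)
Your proposal is correct and follows essentially the same route as the paper: the argument there is precisely the discussion preceding the theorem, which identifies $\YY=\psi\XX$ via the induced substitution \eqref{eq:kroninfl}, solves the set-valued equations to exhibit $\vL_{\pm}$ as $3$-adic model sets with Fourier module $\ZZ[\tfrac{1}{3}]\times\ZZ[\tfrac{1}{3}]$, and then invokes \cite{Nat1} to match this with the pure point part of the spectrum of $\XX$, while the singular continuous component is supplied by the earlier diffraction theorem. Your added remarks on the embedding of diffraction spectra into the dynamical spectrum and on disjointness of the two components make explicit what the paper leaves implicit, but the overall strategy is the same.
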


\section{Generalisations}\label{sec:gen}

When inspecting the structure of the above constructive approach, it
is apparent that the method is not restricted to the particular
example of the squiral tiling or its equivalent block substitution
rule. Instead, one might as well consider a $d$-dimensional
extension to bijective block substitutions on a binary
alphabet as follows. 

Let $K$ be a matrix array of dimension $K_1 \times K_2 \times \ldots
\times K_d$, with all $K_i \ge 2$ and entries $\pm 1$ (with $\bar{1} =
-1$ as before), not all equal.  Now consider the rule
\[
   \varrho \! : \;  1 \longmapsto K \ts , \;
    \bar{1} \longmapsto \bar{K} \ts ,
\]
where $\bar{K}$ is obtained from $K$ by flipping all signs.  This
defines a primitive block substitution rule with substitution
matrix $M=\left(\begin{smallmatrix} m & \bar{m} \\
    \bar{m} & m \end{smallmatrix}\right)$, where $m$ is the
cardinality of $1$s in $K$ and $\bar{m}$ that of $\bar{1}$s. The PF
eigenvalue is $\prod_{i} K_i$, with (left and right) eigenvector given
by $(1,1)$. This is in line with a tiling interpretation, with unit
cubes of two colours as prototiles. They occur with equal
frequency in any fixed point tiling.

Select any legal block patch of size $2\!\times\! 2 \!\times\!  \ldots
\!\times\!  2$, which contains $2^{d}$ blocks, and take its centre as
reference point (meaning that each sector of $\ZZ^{d}$ contains one
block of it). This way, the origin is a corner of a block.  Starting
from such a seed, we can now iterate $\varrho$ to create a sequence of
block patches that ultimately cover all positions of $\ZZ^{d}$, due to
our assumption that all $K_{i} \ge 2$. Note that the block patches of
type $K$ and $\bar{K}$ fit together in a face-to-face manner in this
process.

Without loss of generality, we may now assume that we have constructed
a fixed point in this way, which is possibly only true after replacing
$\varrho$ by a suitable power of it, and modifying $K$
accordingly. This modification is always possible, as follows from an
application of Dirichlet's pigeon hole principle to the finite set
of legal seeds. Note that any power of $\varrho$ still satisfies our
basic assumptions, wherefore this step is immaterial for the arguments
to follow (although it might technically complicate explicit
calculations due to the increased size of the new array $K$).

Let $w$ be the fixed point under consideration. It is then specified
by the recursion relations
\begin{equation}\label{eq:gen-rec}
    w^{}_{K^{}_{1} m^{}_{1} + r^{}_{1}, \ldots, K^{}_{d} m^{}_{d} + r^{}_{d}}
    \, = \,\ts \kappa^{}_{r^{}_{1},\ldots, r^{}_{d}} 
    \ts w^{}_{m^{}_{1},\ldots, m^{}_{d}}
\end{equation}
where all $m_i \in \ZZ$ and $0\le r_i < K_i$, while the coefficients
$\kappa^{}_{r^{}_{1},\ldots, r^{}_{d}} \in \{\pm 1\}$ are the elements
of $K$, numbered accordingly. Due to the primitivity of $\varrho$,
the fixed point $w$ defines a repetitive configuration in the
shift space $\{\pm 1\}^{\ZZ}$, wherefore it leads to a hull
\[
    \XX \, = \, \XX (w) \, := \,
    \overline{\{ t+w \mid t\in\ZZ^{d} \}}
\]
that is a minimal subshift of $\{\pm 1\}^{\ZZ}$. Moreover,
$(\XX,\ZZ^{d})$ defines a strictly ergodic dynamical system
by standard arguments \cite{Rob,Sol,Nat1,Q}.

If $\omega = w \ts \delta^{}_{\ZZ^{d}}$ is the attached Dirac comb,
its autocorrelation $\gamma = \omega \circledast \widetilde{\omega}$
(defined in complete analogy to Eq.~\eqref{eq:gam-def}) exists and is
of the form $\gamma = \eta\ts \delta^{}_{\ZZ^{d}}$ with coefficients
\begin{equation}\label{eq:etagen}
   \eta\ts (m^{}_{1},\ldots,m^{}_{d}) \, = 
   \lim_{N\to\infty} \frac{1}{(2N+1)^{d}}
   \sum_{k^{}_{1}=-N}^{N} \dots \sum_{k^{}_{d}=-N}^{N}
   w^{}_{k^{}_{1},\ldots,k^{}_{d}} \, 
   w^{}_{k^{}_{1}-m^{}_{1},\ldots,k^{}_{d}-m^{}_{d}} \ts .
\end{equation}
The sums are orbit averages of a continuous function under the
action of $\ZZ^{d}$, which always exist due to unique ergodicity
(by an application of the stronger version of Birkhoff's ergodic
theorem \cite{W}).

One can now repeat the constructive calculation of the squiral
block substitution in this more general setting. 

\begin{lemma}\label{lem:gen-eta}
  Let\/ $\varrho$ be a primitive, bijective block substitution on a
  binary alphabet, with a fixed point configuration $w$ that satisfies
  Eq.~\eqref{eq:gen-rec}. The corresponding autocorrelation
  coefficients satisfy\/ $\eta\ts (0,\ldots,0) = 1$ together with the
  linear recursion relations
\[
   \eta\ts (K^{}_{1} m^{}_{1} \! +\!\ts r^{}_{1}, \ldots, 
             K^{}_{d}\ts m^{}_{d} \ts\! +\!\ts r^{}_{d} )
    \, =  \sum_{s^{}_{1}=0}^{1} \dots \sum_{s^{}_{d}=0}^{1}
      \begin{pmatrix}  r^{}_{1},\ldots,r^{}_{d} \\ 
         s^{}_{1},\ldots, s^{}_{d} \end{pmatrix} \,
      \eta\ts (m^{}_{1} \! + \!\ts s^{}_{1}, \ldots, 
          m^{}_{d} \ts \! + \!\ts s^{}_{d})
\]
with rational coefficients\/ $\left(\begin{smallmatrix}
    r^{}_{1} ,\ldots, r^{}_{d} \\ s^{}_{1},\ldots, s^{}_{d} 
\end{smallmatrix}\right)$ of modulus\/ $\le 1$. This recursion 
determines all coefficients uniquely once the initial condition\/
$\eta\ts (0,\ldots,0)$ is given.

In particular, one has\/ $\left(\begin{smallmatrix}  0\, ,\ldots, \ts 0 \\
    s^{}_{1},\ldots, s^{}_{d} \end{smallmatrix}\right) =
\delta^{}_{s^{}_{1},0} \cdot \ldots \cdot \delta^{}_{s^{}_{d},0}$, so
that the relation
\[
    \eta\ts (K^{}_{1} m^{}_{1}, \ldots, K^{}_{d}\ts m^{}_{d} )
    \, = \, \eta\ts (m^{}_{1}, \ldots, m^{}_{d})
\]
   holds for all\/ $(m^{}_{1},\ldots,m^{}_{d}) \in \ZZ^{d}$.
\end{lemma}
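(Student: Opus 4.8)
The plan is to run, essentially verbatim, the computation that produced the recursions \eqref{eq:eta-rec} in the proof of Lemma~\ref{lem:rec}, with the base-$3$ digit expansion there replaced by the mixed-radix expansion in the bases $K_1,\ldots,K_d$. Existence of the coefficients $\eta$ is already handed to us by unique ergodicity (as recorded after Eq.~\eqref{eq:etagen}), so the substantive part is the recursion. Fix a digit vector $\bs{r}=(r_1,\ldots,r_d)$ with $0\le r_i<K_i$ and an arbitrary $\bs{m}\in\ZZ^d$. In the orbit average defining $\eta\ts(K_1 m_1+r_1,\ldots,K_d m_d+r_d)$, I would partition the summation lattice $\ZZ^d$ into its $K_1\cdots K_d$ residue classes modulo $(K_1,\ldots,K_d)$, writing a summation index coordinate-wise as $K_i\ell_i+t_i$ with $0\le t_i<K_i$. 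The fixed-point relation \eqref{eq:gen-rec} turns the first factor $w^{}_{K_1\ell_1+t_1,\ldots}$ into $\kappa^{}_{t_1,\ldots,t_d}\,w^{}_{\ell_1,\ldots,\ell_d}$; for the second factor one has to carry digits, writing $t_i+r_i=s_i K_i+u_i$ with $s_i\in\{0,1\}$ and $0\le u_i<K_i$ (possible since $0\le t_i+r_i<2K_i$), which makes it $\kappa^{}_{u_1,\ldots,u_d}\,w^{}_{m_1+s_1,\ldots,m_d+s_d}$.

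Letting $N\to\infty$, the $\mathcal{O}(N^{d-1})$ boundary indices drop out, each residue class carries the fraction $1/(K_1\cdots K_d)$ of the total mass, and the inner average over $\bs{\ell}$ converges to $\eta\ts(m_1+s_1,\ldots,m_d+s_d)$ by unique ergodicity. Grouping the $\bs{t}$'s according to the carry vector $\bs{s}\in\{0,1\}^d$ then yields precisely the asserted recursion, with
\[
  \begin{pmatrix} r_1,\ldots,r_d \\ s_1,\ldots,s_d \end{pmatrix}
  \, = \, \frac{1}{K_1\cdots K_d}\sum_{\bs{t}\in T^{}_{\bs{s},\bs{r}}}
  \kappa^{}_{\bs{t}}\;\kappa^{}_{\bs{t}+\bs{r}-(s_1 K_1,\ldots,s_d K_d)}\ts ,
\]
where $T^{}_{\bs{s},\bs{r}}=\{\bs{t}:0\le t_i<K_i,\ \lfloor(t_i+r_i)/K_i\rfloor=s_i\text{ for all }i\}$ is a product set of cardinality $\prod_i c_i$ with $c_i=r_i$ if $s_i=1$ and $c_i=K_i-r_i$ if $s_i=0$. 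As every $\kappa\in\{\pm1\}$, each summand is $\pm1$, so $\bigl\lvert\binom{\bs{r}}{\bs{s}}\bigr\rvert\le\lvert T^{}_{\bs{s},\bs{r}}\rvert/(K_1\cdots K_d)\le1$, and in fact $\sum_{\bs{s}}\bigl\lvert\binom{\bs{r}}{\bs{s}}\bigr\rvert\le1$ because $\sum_{\bs{s}\in\{0,1\}^d}\lvert T^{}_{\bs{s},\bs{r}}\rvert=\prod_i\bigl((K_i-r_i)+r_i\bigr)=K_1\cdots K_d$. For $\bs{r}=\bs{0}$ no digit ever carries, so $T^{}_{\bs{s},\bs{0}}=\varnothing$ unless $\bs{s}=\bs{0}$, where $\bs{u}=\bs{t}$ and the sum collapses to $\sum_{\bs{t}}\kappa_{\bs{t}}^{2}=K_1\cdots K_d$; hence $\binom{\bs{0}}{\bs{s}}=\prod_i\delta^{}_{s_i,0}$ and $\eta\ts(K_1 m_1,\ldots,K_d m_d)=\eta\ts(m_1,\ldots,m_d)$.

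For the uniqueness claim I would, as in Lemma~\ref{lem:rec}, reduce to a finite amount of linear algebra near the origin. Specialising the recursion to the quotient vector $\bs{0}$ and to $\bs{r}\in\{0,1\}^d$ (legal since $K_i\ge2$) gives a $2^d\times2^d$ linear system for the values $\{\eta\ts(\bs{s}):\bs{s}\in\{0,1\}^d\}$; its $\bs{r}=\bs{0}$ row is vacuous, and for $\bs{r}\ne\bs{0}$ one has $\lvert T^{}_{\bs{r},\bs{r}}\rvert=\prod_{i:\,r_i=0}K_i$, so the diagonal coefficient obeys $\bigl\lvert\binom{\bs{r}}{\bs{r}}\bigr\rvert\le\bigl(\prod_{i:\,r_i=1}K_i\bigr)^{-1}\le\tfrac12$. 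Combined with $\sum_{\bs{s}}\bigl\lvert\binom{\bs{r}}{\bs{s}}\bigr\rvert\le1$ and the remark that $T^{}_{\bs{0},\bs{r}}\ne\varnothing$ (so that $\sum_{\bs{s}}\lvert\binom{\bs{r}}{\bs{s}}\rvert=1$ is possible only when $\binom{\bs{r}}{\bs{0}}\ne0$), this yields strict diagonal dominance of $I$ minus the $\bs{r},\bs{s}\ne\bs{0}$ block, hence unique solvability in terms of $\eta\ts(\bs{0})$. The remaining coefficients are then pinned down by the evenness $\eta\ts(\bs{m})=\eta\ts(-\bs{m})$ of an autocorrelation together with induction on $\lvert\bs{m}\rvert_{\infty}$: because every $K_i\ge2$, writing $\bs{m}=(K_1 q_1+r_1,\ldots,K_d q_d+r_d)$ makes $\lvert\bs{q}+\bs{s}\rvert_{\infty}<\lvert\bs{m}\rvert_{\infty}$ once $\lvert\bs{m}\rvert_{\infty}$ exceeds a small absolute constant, so the recursion expresses $\eta\ts(\bs{m})$ through strictly smaller arguments. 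I expect this last point --- verifying that the near-origin system is genuinely non-degenerate in \emph{arbitrary} dimension --- to be the only place that requires real care beyond the squiral computation of Lemma~\ref{lem:rec}, since a priori the sign-dependent cancellations among the $\kappa$'s could have spoiled the diagonal-dominance estimate; what rescues it is the uniform bound $\bigl\lvert\binom{\bs{r}}{\bs{r}}\bigr\rvert\le\tfrac12$ for all $\bs{r}\ne\bs{0}$, which holds no matter how the signs are distributed in $K$.
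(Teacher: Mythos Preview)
Your derivation of the recursion is exactly the paper's: split the orbit average into residue classes modulo $(K_1,\ldots,K_d)$, apply Eq.~\eqref{eq:gen-rec} to both factors, keep track of the carries, and regroup. The explicit formula you obtain for $\binom{\bs{r}}{\bs{s}}$ coincides with the paper's Eq.~\eqref{eq:sumkap}, and your computation for $\bs{r}=\bs{0}$ is the same as the paper's last paragraph.

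For uniqueness, your argument is correct but more elaborate than necessary. You treat the $2^d$ unknowns $\eta(\bs{s})$, $\bs{s}\in\{0,1\}^d$, as a full linear system and establish strict diagonal dominance of $I$ minus the non-trivial block, using the row-sum bound $\sum_{\bs{s}}\bigl\lvert\binom{\bs{r}}{\bs{s}}\bigr\rvert\le 1$ together with $\bigl\lvert\binom{\bs{r}}{\bs{r}}\bigr\rvert\le\tfrac12$ and the observation that equality in the row-sum bound forces $\binom{\bs{r}}{\bs{0}}\ne 0$. This works, but you are overlooking that the system is already \emph{lower triangular} for the partial order on $\{0,1\}^d$: your own formula gives $\lvert T_{\bs{s},\bs{r}}\rvert=\prod_i c_i$ with $c_i=r_i$ when $s_i=1$, so $\binom{\bs{r}}{\bs{s}}=0$ whenever some $s_i>r_i$. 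Hence one can solve for the $\eta(\bs{r})$ by induction on the number of $1$'s in $\bs{r}$; at each step only a single new unknown appears on both sides, with prefactor $\binom{\bs{r}}{\bs{r}}$ of modulus $\le\bigl(\prod_{i:\,r_i=1}K_i\bigr)^{-1}<1$. That is precisely how the paper argues (``a linear equation for a single coefficient that occurs on both sides \ldots\ the prefactor can never be $1$''). Your diagonal-dominance route buys nothing extra here, though the row-sum inequality $\sum_{\bs{s}}\bigl\lvert\binom{\bs{r}}{\bs{s}}\bigr\rvert\le 1$ you derive along the way is a pleasant bonus not recorded in the paper. The final induction on $\lvert\bs{m}\rvert_\infty$, using evenness to reach negative coordinates, matches the paper's ``remaining coefficients are then determined recursively''.
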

\begin{proof}
  It is clear that $\eta\ts (0,\ldots,0)=1$. For a general $\eta\ts
  (K^{}_{1} m^{}_{1} \! +\!\ts r^{}_{1}, \ldots, K^{}_{d}\ts m^{}_{d}
  \ts\! +\!\ts r^{}_{d} )$, we start from the right-hand side of
  Eq.~\eqref{eq:etagen} (for finite $N$) and split the summation over
  the $i$th index modulo $K_{i}$.  For each term, we can now apply the
  recursion \eqref{eq:gen-rec}.  The sum can then be regrouped to
  contribute to coefficients of the form $\eta\ts (m^{}_{1}\! +\!
  s^{}_{1}, \ldots, m^{}_{d}\! + \! s^{}_{d})$ with $0\le s^{}_{i}\le 1$.
  In the limit as $N\to\infty$, the contributions sum up to
\begin{equation}\label{eq:sumkap}
   \begin{pmatrix}  r^{}_{1},\ldots,r^{}_{d} \\ 
         s^{}_{1},\ldots, s^{}_{d} \end{pmatrix}
   \, = \,
   \sum_{t^{}_{1}=s^{}_{1}(K^{}_{1}-r^{}_{1})}^{K^{}_{1}-1 
   -r^{}_{1}(1-s^{}_{1})}  \dots
   \sum_{t^{}_{d}=s^{}_{d}(K^{}_{d}-r^{}_{d})}^{K^{}_{d}-1 
   -r^{}_{d}(1-s^{}_{d})} 
   \frac{\kappa^{}_{t^{}_{1},\ldots,t^{}_{d}} \, 
   \kappa^{}_{t^{}_{1}+r^{}_{1}-s^{}_{1}K^{}_{1},\ldots,
   t^{}_{d}+r^{}_{d}-s^{}_{d}K^{}_{d}}}{K^{}_{1}\cdot\ldots\cdot K^{}_{d}}    
\end{equation}
which follows by carefully keeping track of the various overflow rules
due to the calculations modulo $K_{i}$. The claim on the rationality and
boundedness is then obvious. 

Inspecting the equations for the autocorrelation coefficients, one
notices that for some of them (for instance those with $m^{}_{1} =
\ldots = m^{}_{d} = 0$ and one $r_{i} = 1$) the resulting equation is
a linear equation for a single coefficient that occurs on both
sides. The prefactor of this coefficient on the right hand side can
never be $1$, as one can see from Eq.~\eqref{eq:sumkap} together with
the assumption that the $\kappa$-array does contain entries of both
signs. So, this type of equation always has a unique solution in terms
of $\eta\ts (0,\ldots,0)$, which fixes this particular $\eta$
value. The same argument applies to all equations of this type. The
remaining coefficients are then determined recursively, and the
solution is unique as soon as $\eta\ts (0,\ldots,0)$ is specified.

The final claim is clear for $s^{}_{1}=\ldots =s^{}_{d}=0$. 
If $s^{}_{i}=1$ for some $i$, the corresponding sum is empty, and
the coefficient vanishes.
\end{proof}

By construction, the mapping $\eta\! : \, \ZZ^{d} \longrightarrow \RR$
is a positive definite function on $\ZZ^{d}$, with $\eta\ts
(0,\ldots,0)=1$. By the Herglotz-Bochner theorem, see \cite{Rudin} or
Lemma~\ref{lem:gen-Wiener} below, there is then a unique probability
measure $\mu$ on $\TT^{d} = [0,1)^{d}$ such that
\begin{equation}\label{eq:gen-mu}
     \eta\ts (\bs{m}) \, = \,
     \int_{0}^{1} \cdots \int_{0}^{1}
     \exp (2 \pi \ii\ts\ts \bs{mx} ) 
    \dd \mu (\bs{x}) \ts ,
\end{equation}
where we write $\bs{m} = (m^{}_{1}, \ldots , m^{}_{d})$ etc.\ in the
remainder of this section, with $\bs{mx}$ denoting the scalar product.
The positive measure $\mu$ has a unique decomposition into three
components,
\[
    \mu \, = \, \mu^{}_{\mathsf{pp}} + \mu^{}_{\mathsf{sc}} 
      + \mu^{}_{\mathsf{ac}} \ts ,
\]
with Lebesgue measure (as the Haar measure on $\TT^{d}$) as
reference measure.

\begin{prop}\label{prop:pure-type}
  Let $\varrho$ be as in Lemma~$\ref{lem:gen-eta}$, with
  autocorrelation coefficients $\eta$ and associated positive
  measure $\mu$ on $\TT^{d}$. Then, the spectral type of\/ $\mu$
  is pure, which means that it is either a pure point measure,
  a purely singular continuous measure, or a purely 
  absolutely continuous measure.
\end{prop}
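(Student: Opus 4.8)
The plan is to mimic the structure of the planar argument: show that the three spectral components $\mu^{}_{\mathsf{pp}}$, $\mu^{}_{\mathsf{sc}}$, $\mu^{}_{\mathsf{ac}}$ each correspond, via inverse Fourier transform, to a function on $\ZZ^{d}$ that again satisfies the \emph{same} linear recursion relations of Lemma~\ref{lem:gen-eta}, but possibly with a different value at the origin. The key fact making this work is that the three-part Lebesgue decomposition $\mu = \mu^{}_{\mathsf{pp}} + \mu^{}_{\mathsf{sc}} + \mu^{}_{\mathsf{ac}}$ is invariant under the substitution-induced self-map of measures on $\TT^{d}$: the recursion \eqref{eq:gen-rec} for $w$ translates (exactly as in Eq.~\eqref{eq:F-rel} for $d=2$) into a functional relation $\mu = \mathcal{T}\mu$, where $\mathcal{T}$ is the transfer-type operator $(\mathcal{T}\mu)(\cdot) = \frac{1}{\prod_i K_i}\,(\Theta\cdot(S_\ast\mu))(\cdot)$ built from the inflation by $\mathrm{diag}(K_1,\ldots,K_d)$ together with multiplication by a non-negative trigonometric kernel $\vartheta$ (the $d$-dimensional analogue of Eq.~\eqref{eq:vartheta-def}, whose general form is promised in Eq.~\eqref{eq:gen-theta}). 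Because $\Theta$ and $S_\ast$ preserve each of the classes $\mathsf{pp}$, $\mathsf{sc}$, $\mathsf{ac}$ separately — scaling a pure-point measure stays pure point, pulling back along the expansive map and multiplying by a smooth density preserves absolute continuity and preserves continuity-plus-singularity — each component $\mu^{}_\bullet$ is itself a fixed point of $\mathcal{T}$, hence its Fourier coefficients $\eta^{}_\bullet$ obey exactly the recursions of Lemma~\ref{lem:gen-eta}. This is the point I would state and justify carefully first.

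Next I would rule out the three mixed possibilities one at a time using the rigidity of those recursions. For the absolutely continuous part: by the argument of Lemma~\ref{lem:RL} (now in dimension $d$), the relation $\eta^{}_{\mathsf{ac}}(K_1 m_1,\ldots,K_d m_d) = \eta^{}_{\mathsf{ac}}(m_1,\ldots,m_d)$ transports the value at the origin — and more generally any nonzero ``special value'' near the origin — out to arbitrarily large arguments; by the Riemann--Lebesgue lemma an absolutely continuous measure must have Fourier coefficients tending to $0$, so $\eta^{}_{\mathsf{ac}}(0,\ldots,0) = 0$ and, by the uniqueness clause of Lemma~\ref{lem:gen-eta}, $\eta^{}_{\mathsf{ac}} \equiv 0$. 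For the pure-point part: the same self-similarity of the Fourier coefficients shows that if $\mu^{}_{\mathsf{pp}}$ is nontrivial then $\eta^{}_{\mathsf{pp}}$ is a nonzero Besicovitch-almost-periodic sequence whose coefficient sequence is likewise scaling-invariant; testing against Wiener's criterion (Lemma~\ref{lem:gen-Wiener}) one computes $\frac{1}{(2N+1)^d}\sum_{\|\bs m\|_\infty\le N}\eta^{}_{\mathsf{pp}}(\bs m)^2$ and shows it \emph{cannot} decay unless $\eta^{}_{\mathsf{pp}}(0,\ldots,0)=0$ — again forcing $\eta^{}_{\mathsf{pp}}\equiv 0$ by uniqueness — \emph{provided} one has already shown the full $\mu$ is continuous, which is exactly what a Wiener-type growth bound on $\Sigma(N) = \sum_{\|\bs m\|_\infty\le N}\eta(\bs m)^2$ gives. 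So in all three cases the conclusion is the dichotomy: whichever single component $\mu^{}_\bullet$ is nonzero must have $\eta^{}_\bullet(0,\ldots,0)=1$, hence equals the whole of $\mu$, and the other two vanish identically.

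To make the pure-point exclusion fully self-contained I would record the $d$-dimensional Wiener estimate: from the recursions of Lemma~\ref{lem:gen-eta} and Jensen's inequality $(\sum_{j=1}^{2^d} a_j)^2 \le 2^d \sum_j a_j^2$, together with the boundedness $|\eta(\bs m)|\le 1$, one obtains $\Sigma(KN) \le C\,\Sigma(N) + \mathcal{O}(N^{d-1})$ with an explicit constant $C < (\prod_i K_i)$ for \emph{any} bijective $K$ with mixed signs — this is the higher-dimensional, coordinate-free version of Lemma~\ref{lem:W-crit} and of the remark surrounding Eq.~\eqref{eq:alt-estimate}. (One does not need the sharper Lemma~\ref{lem:pos}-type monotonicity here, since the weaker bound with the $\mathcal{O}(N^{d-1})$ correction already gives $\Sigma(N) = o(N^d)$, hence $\mu$ has no atoms.) The genuinely delicate point — the ``main obstacle'' — is the very first step: verifying that the Lebesgue decomposition really is respected by $\mathcal{T}$, i.e.\ that pulling back a measure along the non-invertible covering $\bs x \mapsto \mathrm{diag}(K_i)\bs x$ on $\TT^d$ and then multiplying by the kernel $\vartheta$ does not transmute singular-continuous mass into absolutely continuous mass or vice versa, and that the three pieces of $\mu$ separately inherit the functional equation. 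This requires writing the operator on the coefficient side (where it is the clean linear recursion) and invoking the uniqueness of the Fourier--Stieltjes representation, rather than working with the measures directly; once that is set up cleanly, the rest is the routine application of Riemann--Lebesgue, Wiener, and the uniqueness clause of Lemma~\ref{lem:gen-eta}, exactly as in the planar Theorem above.
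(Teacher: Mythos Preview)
Your first step --- that each spectral component $\mu^{}_{\alpha}$ ($\alpha\in\{\mathsf{pp},\mathsf{sc},\mathsf{ac}\}$) has Fourier coefficients $\eta^{}_{\alpha}$ satisfying the \emph{same} linear recursion of Lemma~\ref{lem:gen-eta} --- is exactly the paper's key step. But you then miss the one-line conclusion that follows immediately from it. By the uniqueness clause of Lemma~\ref{lem:gen-eta} (which you yourself invoke in the $\mathsf{ac}$ case) together with \emph{linearity} of the recursion, one has $\eta^{}_{\alpha}(\bs{m}) = \eta^{}_{\alpha}(\bs{0})\cdot\eta(\bs{m})$ for every $\bs{m}$, hence $\mu^{}_{\alpha} = \eta^{}_{\alpha}(\bs{0})\cdot\mu$. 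The three components are thus scalar multiples of the \emph{same} measure $\mu$; since they are mutually singular, at most one scalar can be positive. That is the entire proof of Proposition~\ref{prop:pure-type}, and it never touches Riemann--Lebesgue, Wiener, or any case analysis.

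Your alternative route --- excluding $\mathsf{ac}$ and $\mathsf{pp}$ separately --- overshoots the proposition (it would give purely $\mathsf{sc}$, not merely pure type) and contains a genuine gap. The $\mathsf{pp}$ exclusion rests on the asserted bound $\Sigma(KN)\le C\,\Sigma(N)+\mathcal{O}(N^{d-1})$ with $C<\prod_i K_i$ ``for any bijective $K$ with mixed signs''. This is neither proved nor true in general: the paper explicitly notes after Theorem~\ref{thm:spec-type} that pure point examples exist (fully periodic fixed points, typically with non-trivial height lattice), so a universal Wiener bound ruling out atoms cannot hold. Your $\mathsf{ac}$ exclusion is essentially the paper's argument \emph{after} Proposition~\ref{prop:pure-type} (leading to Theorem~\ref{thm:spec-type}), but note it too needs an extra ingredient --- a nonzero $\eta(\bs{m})$ at some $\bs{m}\ne\bs{0}$ --- which the paper supplies via the explicit coefficient $\left(\begin{smallmatrix} K_1-1,\ldots,K_d-1\\ 0,\ldots,0\end{smallmatrix}\right)\ne 0$; your ``transports the value at the origin'' does not suffice since $\bs{0}$ is a fixed point of the scaling. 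Finally, the ``main obstacle'' you identify (that $\mathcal{T}$ respects the Lebesgue decomposition on the measure side) is not needed at all: the paper works entirely on the coefficient side and appeals only to mutual singularity, as you yourself remark in your last sentence.
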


\begin{proof}
   By standard arguments, we know that $\mu^{}_{\mathsf{pp}} \perp
   \mu^{}_{\mathsf{sc}} \perp \mu^{}_{\mathsf{ac}} \perp
   \mu^{}_{\mathsf{pp}}$ in the measure-theoretic sense. Now, 
   the recursion relations imply a set of functional relations
   for $\mu$, which must hold for each component separately
   as a consequence of the orthogonality relations. 

   Defining $\eta^{}_{\alpha} (\bs{m}) = \check{\mu}^{}_{\alpha}
   (\bs{m})$ for $\alpha \in \{ \mathsf{pp}, \mathsf{sc}, \mathsf{ac}
   \}$ separately, 
   each type of autocorrelation coefficient must then
   satisfy the same recursion relations as $\eta$ itself, subject to
   the condition
\[
   \eta\ts (\bs{0}) \, = \, \eta^{}_{\mathsf{pp}}(\bs{0}) +
   \eta^{}_{\mathsf{sc}} (\bs{0}) + \eta^{}_{\mathsf{ac}} (\bs{0}) 
   \, = \, 1 \ts .
\]
Note that, due to the linearity of the recursion relations, this
splitting indeed leads to a solution of the recursion with the correct
initial condition, hence the unique solution stated in
Lemma~\ref{lem:gen-eta}.

This observation means that the three components of $\eta\ts (\bs{m})$
are proportional according to their individual initial conditions
$\eta^{}_{\alpha} (\bs{0})$, so cannot produce different spectral
type.  This is only compatible with the claim made, and the type is
pure.
\end{proof}

Let us take a closer look at the spectral types. Assume that the
measure $\mu$ is absolutely continuous. Then, the Riemann-Lebesgue
Lemma tells us that $\eta\ts (\bs{m}) = \eta^{}_{\mathsf{ac}} (\bs{m})
\longrightarrow 0$ as $\lvert \bs{m} \rvert \to \infty$. Recalling the
last claim of Lemma~\ref{lem:gen-eta}, it follows that we must have
$\eta\ts (\bs{m}) =0$ for all $\bs{m}\ne \bs{0}$, and hence $\eta\ts
(\bs{m}) = \delta^{}_{\bs{m},\bs{0}}$.  This would imply $\mu$ to be
Lebesgue measure on $\TT^{d}$.  A careful inspection of the recursion
reveals that this outcome is only possible if all coefficients of type
$\left(
\begin{smallmatrix} r^{}_{1},\ldots ,r^{}_{d} \\
0\, , \ldots , \, 0 \end{smallmatrix}\right)$ vanish. 
However, we have
\[
   \left(\begin{array}{@{}c@{}c@{}c@{}}  
   K^{}_{1}\!-\! 1 &,\ldots, &K^{}_{d}\! -\! 1 \\ 
         0& ,\ldots, &0 \end{array}\right)
   \, = \,
   \frac{\kappa^{}_{0,\ldots,0}\, 
   \kappa^{}_{K^{}_{1}-1,\ldots,K^{}_{d}-1}}
   {K^{}_{1}\cdot\ldots\cdot K^{}_{d}} \, \ne\, 0
\]
which excludes this possibility. Consequently, the measure
$\mu$ in Proposition~\ref{prop:pure-type} is a singular measure.

Put together, we have proved the following result.
\begin{theorem}\label{thm:spec-type}
  Let\/ $\varrho$ be a primitive, bijective block substitution on\/
  $\ZZ^{d}$, for a binary alphabet. Assume that it is extensive in all
  directions, and let $w$ be a fixed point configuration that
  satisfies Eq.~\eqref{eq:gen-rec} on\/ $\ZZ^{d}$.

  If\/ $\eta$ is the corresponding autocorrelation function on\/
  $\ZZ^{d}$, and $\mu$ the attached positive measure on $\TT^{d}$
  according to Eq.~\eqref{eq:gen-mu}, the measure\/ $\mu$ is 
  singular. In particular, it is either 
  pure point or purely singular continuous.   \qed
\end{theorem}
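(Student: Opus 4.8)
The plan is to assemble Theorem~\ref{thm:spec-type} from the pieces that have just been established, following the same logical skeleton that worked in the planar case. First I would invoke Lemma~\ref{lem:gen-eta} to guarantee that $\eta$ exists, is positive definite on $\ZZ^{d}$, and satisfies the stated linear recursions with the normalisation $\eta(\bs 0)=1$; then the Herglotz--Bochner theorem (via Lemma~\ref{lem:gen-Wiener}) produces the probability measure $\mu$ on $\TT^{d}$ of Eq.~\eqref{eq:gen-mu}. Next I would apply Proposition~\ref{prop:pure-type} to conclude that $\mu$ is of pure spectral type, so the only thing left is to rule out one of the three possibilities, namely the absolutely continuous one.

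The key step is the exclusion of absolute continuity. Here I would argue exactly as in the paragraph preceding the theorem: if $\mu$ were absolutely continuous, the Riemann--Lebesgue lemma forces $\eta(\bs m)\to 0$ as $\lvert\bs m\rvert\to\infty$, while the relation $\eta(K^{}_{1}m^{}_{1},\ldots,K^{}_{d}m^{}_{d})=\eta(m^{}_{1},\ldots,m^{}_{d})$ from Lemma~\ref{lem:gen-eta} propagates any nonzero value of $\eta$ on a fixed lattice point out to infinity along the geometric sequence of its $K$-multiples. The two statements are compatible only if $\eta(\bs m)=\delta^{}_{\bs m,\bs 0}$, i.e. $\mu$ is Haar/Lebesgue measure, and inspecting the recursion shows this requires every coefficient $\bigl(\begin{smallmatrix} r^{}_{1},\ldots,r^{}_{d}\\ 0,\ldots,0\end{smallmatrix}\bigr)$ to vanish. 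But the explicit formula \eqref{eq:sumkap} gives
\[
   \begin{pmatrix} K^{}_{1}\!-\!1,\ldots,K^{}_{d}\!-\!1 \\ 0,\ldots,0\end{pmatrix}
   \, = \, \frac{\kappa^{}_{0,\ldots,0}\,\kappa^{}_{K^{}_{1}-1,\ldots,K^{}_{d}-1}}
   {K^{}_{1}\cdots K^{}_{d}} \, \ne\, 0 ,
\]
a contradiction. Hence $\mu$ is singular, and by purity it is either pure point or purely singular continuous. The ``extensive in all directions'' hypothesis is what makes the recursion \eqref{eq:gen-rec} fill all of $\ZZ^{d}$ and hence legitimises Lemma~\ref{lem:gen-eta}; I would note this is exactly the role played by the assumption $K_i\ge 2$ in Section~\ref{sec:gen}.

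The main obstacle is not any single deep estimate but rather making sure the bookkeeping in Lemma~\ref{lem:gen-eta} and formula \eqref{eq:sumkap} is airtight, in particular that the coefficient $\bigl(\begin{smallmatrix} \bs K-\bs 1\\ \bs 0\end{smallmatrix}\bigr)$ really is the single-term sum with the two $\kappa$ factors as written (the overflow/carry conventions modulo the $K_i$ must be tracked consistently), since the whole argument hinges on this being nonzero. Everything else — pure type, the Riemann--Lebesgue step, the propagation of nonzero values — is then a direct transcription of the planar proof with $\ZZ^{2}$ replaced by $\ZZ^{d}$ and the matrix $M=\left(\begin{smallmatrix}5&4\\4&5\end{smallmatrix}\right)$ replaced by the general substitution matrix. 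Note also that, unlike in the planar case, one does not here claim to decide \emph{which} of pure point or purely singular continuous occurs; that requires the additional trivial-height-lattice hypothesis and the Wiener-lemma estimate analogous to Lemma~\ref{lem:W-crit}, which is taken up separately.
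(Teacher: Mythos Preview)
Your proposal is correct and follows essentially the same approach as the paper: the theorem is assembled from Proposition~\ref{prop:pure-type} (pure type) together with the Riemann--Lebesgue argument that absolute continuity would force $\eta(\bs m)=\delta^{}_{\bs m,\bs 0}$, which is then excluded via the nonvanishing of the coefficient $\bigl(\begin{smallmatrix} K^{}_{1}-1,\ldots,K^{}_{d}-1\\ 0,\ldots,0\end{smallmatrix}\bigr)$ from Eq.~\eqref{eq:sumkap}. Your additional remarks on the role of the extensiveness hypothesis and on why the pure-point versus singular-continuous distinction is deferred are accurate and match the paper's treatment.
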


Both possibilities clearly occur. Examples for pure point cases can be
constructed via substitutions that have fully periodic fixed
points. Such fixed points usually have non-trivial height lattices,
and are thus of limited interest; see \cite[Sec.~3.1]{Nat1} for the
precise definition of the height lattice of a $\ZZ^{d}$ lattice
substitution.  If the height lattice of a primitive bijective
substitution is trivial, one can construct a cyclic space in the
complement of the span of the eigenfunctions \cite{Nat1,Nat2}.  Its
spectral measure is nothing but our measure $\mu$, which is then
inevitably singular continuous.

\begin{coro}
  If the block substitution from Theorem~$\ref{thm:spec-type}$ has
  trivial height, the diffraction measure of the corresponding
  balanced Dirac comb is purely singular continuous.  \qed
\end{coro}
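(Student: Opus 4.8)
The plan is to combine Theorem~\ref{thm:spec-type} with the spectral structure of bijective substitutions established in \cite{Nat1,Nat2}. By the higher-dimensional analogue of \cite[Thm.~1]{B}, exactly as in the planar case, the diffraction measure of the balanced Dirac comb $\omega = w\ts\delta^{}_{\ZZ^{d}}$ is $\widehat{\gamma} = \mu * \delta^{}_{\ZZ^{d}}$, where $\mu$ is the positive measure on $\TT^{d}$ from Eq.~\eqref{eq:gen-mu}. Theorem~\ref{thm:spec-type} tells us that $\mu$ is singular, hence either pure point or purely singular continuous, and the same dichotomy then holds for $\widehat{\gamma}$. It therefore suffices to rule out the pure point alternative under the assumption of trivial height.

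First I would identify $\mu$ as a spectral measure. Writing $\mathbb{P}$ for the unique $\ZZ^{d}$-invariant probability measure on the hull $\XX$ and $U^{}_{\bs{m}}$ for the Koopman operator of the shift by $\bs{m}$, the ergodic theorem applied to Eq.~\eqref{eq:etagen} yields $\eta\ts(\bs{m}) = \langle U^{}_{\bs{m}} f, f\rangle^{}_{L^{2}(\XX,\mathbb{P})}$ for the observable $f(w) = w^{}_{\bs{0}} \in \{\pm 1\}$. Thus $\mu$ is precisely the spectral measure of $f$. Since both colours occur with equal frequency, $f$ has mean zero and so is orthogonal to the constants; as $(\XX,\ZZ^{d},\mathbb{P})$ is ergodic, the constants span the eigenspace for the trivial character, whence $\mu$ already has no atom at $\bs{0}$.

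The heart of the matter is to show that $\mu$ has no atom at any \emph{non-trivial} character either, and this is where the hypothesis of trivial height enters. By the classification of the continuous eigenfunctions of primitive bijective block substitutions of constant length in arbitrary dimension \cite{Nat1,Nat2}, all such eigenfunctions factor through the maximal equicontinuous factor, and when the height lattice is trivial the balanced observable $f$ is orthogonal to the closed linear span of all eigenfunctions; equivalently, the closed cyclic subspace generated by $f$ under $\{U^{}_{\bs{m}}\}^{}_{\bs{m}\in\ZZ^{d}}$ lies in the orthogonal complement of the Kronecker factor. Hence $\mu$, being the spectral measure of a vector orthogonal to every eigenfunction, carries no point mass at all, so $\mu$ is continuous. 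Together with Theorem~\ref{thm:spec-type} this forces $\mu$ to be purely singular continuous, and therefore so is the periodisation $\widehat{\gamma} = \mu * \delta^{}_{\ZZ^{d}}$ on $\RR^{d}$. By strict ergodicity of $(\XX,\ZZ^{d})$, the same conclusion holds for the balanced Dirac comb attached to every element of the hull.

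The main obstacle is the middle step: one genuinely needs the structure theory of \cite{Nat1,Nat2} to know that triviality of the height lattice forces $f$ to avoid the Kronecker factor entirely. This is an external input rather than something proved here; what one must check is only that our standing assumptions --- the substitution is primitive, bijective, binary, extensive in all directions, and (after passing to a power of $\varrho$ if necessary, as arranged in Section~\ref{sec:gen}) admits the fixed point $w$ satisfying Eq.~\eqref{eq:gen-rec} --- place us squarely within the hypotheses of \cite{Nat1}. The remaining ingredients --- singularity of $\mu$ from Theorem~\ref{thm:spec-type}, the identification of $\mu$ as a spectral measure via the ergodic theorem, the passage from $\mu$ on $\TT^{d}$ to $\widehat{\gamma}$ on $\RR^{d}$ by convolution with $\delta^{}_{\ZZ^{d}}$, and strict ergodicity --- are all routine or already in hand.
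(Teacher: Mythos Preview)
Your proposal is correct and follows essentially the same route as the paper: the paper's argument (the paragraph immediately preceding the corollary) is precisely that, by \cite{Nat1,Nat2}, trivial height guarantees a cyclic space in the orthogonal complement of the eigenfunctions whose spectral measure is $\mu$, whence $\mu$ has no atoms and Theorem~\ref{thm:spec-type} forces singular continuity. Your version is more explicit --- you spell out the identification of $\mu$ as the spectral measure of $f(w)=w^{}_{\bs{0}}$ via the ergodic theorem and name the Kronecker factor --- but the logical skeleton and the reliance on \cite{Nat1,Nat2} as the key external input are identical.
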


\begin{remark}\label{rem:strange}
  Singular continuity of a (positive) measure $\mu$ comprises rather
  different situations in $d>1$ dimensions.  For instance, $\mu$ can
  be genuinely singular continuous, such as the measure of our squiral
  tiling above. However, one can also find product measures of a
  different kind. As an example, consider the primitive, bijective block
  substitution defined by
\[
   \begin{array}{ccccc}
     & & \bar{1} & 1 & \bar{1} \\
   1 &  \longmapsto & 1 & \bar{1} & 1
   \end{array}
\]
in symbolic notation on the binary alphabet $\{1,\bar{1}\}$.
This rule defines a hull that is $2$-periodic in the horizontal
direction and Thue-Morse in the vertical one. Consequently, the
measure $\mu$ in this case is a product measure, written as
$\mu = \mu^{}_{1} \otimes \mu^{}_{2}$, where $\mu^{}_{1}$ is a
pure point measure and $\mu^{}_{2}$ is the classic TM measure.
Nevertheless, $\mu$ is purely singular continuous as a measure
on $\TT^{2}$. This is an example with a non-trivial height lattice
that still shows purely singular continuous diffraction, though
the corresponding distribution function is \emph{not}
continuous.
\end{remark}

Let us assume that the probability measure $\mu$ from
Eq.~\eqref{eq:gen-mu} is purely singular continuous and that the
distribution function $F$ for $\widehat{\gamma} = \mu *
\delta^{}_{\ZZ^{d}}$ is continuous (which excludes examples such as
those mentioned in Remark~\ref{rem:strange}). The latter condition is
equivalent to the $d$ one-dimensional distribution functions
$F(1,\ldots,1,x_{i},1,\ldots,1)$ being continuous, by an application
of \cite[Thm.~2.3]{Tucker}. Then, the recursion relation for the
autocorrelation coefficients $\eta$ can meaningfully be turned into an
iteration, as in our planar example. One can then derive an explicit
representation of $\mu$ (and hence also $\widehat{\gamma}$) as a
(generalised) Riesz product in this case. In particular, iterating the
corresponding functional equation once with initial condition $F^{(0)}
(\bs{x}) = \prod_{i=1}^{d} x_{i}$ leads to $F^{(1)}$, with density
function
\[
    f^{(1)} (\bs{x}) \, = \,
    \frac{\partial^{d}}{\partial x^{}_{1} \dots \partial x^{}_{d}}
    F^{(1)} (\bs{x}) \ts , 
\]
which plays the role of the trigonometric function $\vartheta$ in
this more general setting. It can
directly be calculated by means of the recursion coefficients as
\begin{equation}\label{eq:gen-theta}
   \vartheta (\bs{x}) \, = 
   \sum_{r^{}_{1} = 1-K^{}_{1}}^{K^{}_{1} - 1} \dots 
   \sum_{r^{}_{d} = 1 - K^{}_{d}}^{K^{}_{d} - 1}
   \begin{pmatrix}  \lvert r^{}_{1} \rvert , 
         \ldots , \lvert r^{}_{d} \rvert \\ 
         0 \;\, , \ldots , \;\, 0 \end{pmatrix}
   \prod_{i=1}^{d} \cos (2 \pi r^{}_{i} \ts x^{}_{i})\ts ,
\end{equation}
which defines a non-negative function on $\RR^{d}$ with
$\int_{\TT^{d}} \vartheta (\bs{x}) \dd \bs{x} = 1$.  This leads to
$f^{(N)} (\bs{x}) = \prod_{\ell = 0}^{N-1} \vartheta (K_{1}^{\ell}
x^{}_{1}, \ldots , K_{d}^{\ell} x^{}_{d} )$ and thus to an infinite
Riesz product for the diffraction measure $\widehat{\gamma}$.
The distribution function can also be written as
\[
   F (\bs{x}) \, = \, x^{}_{1} \cdot \ldots \cdot x^{}_{d}
   \sum_{\bs{m} \in \ZZ^{d}} \eta\ts (\bs{m})
   \prod_{i=1}^{d} \mathrm{sinc} (2 \pi m^{}_{i} x^{}_{i} ) \ts ,
\]
in complete analogy to our previous formulas of this type.
\smallskip

A complete classification of the cases with pure point spectrum and an
extension to larger alphabets remain as interesting open problems.
Another question concerns a better understanding of the topological
factors of bijective substitutions. We expect a complete
correspondence between the original dynamical spectrum and the
diffraction spectra of the system and its factors.

\section*{Acknowledgements}

We are grateful to Natalie Priebe Frank, Franz G\"ahler, Tilmann
Gneiting, Holger K\"osters, Daniel Lenz and Robbie Robinson for
helpful discussions. This work was supported by the German Research
Council (DFG), within the CRC 701.

\appendix

\section*{Appendix}

Let us formulate Wiener's lemma for the group $\ZZ^{d}$, which is a
straight-forward generalisation of the case $d=1$ in
\cite[Sec.~4.16]{Nad}. Here, $C_{n} = \{ u \in \RR^{d} \mid \| u
\|^{}_{\infty} \le n \}$ denotes the closed cube of sidelength $2n$,
centred at the origin. One has $\mathrm{card} (\ZZ^{d}\cap C_{n}) =
(2n\! + \!  1)^{d}$.
\begin{lemma}\label{lem:gen-Wiener}
  If\/ $\eta \! : \, \ZZ^{d} \longrightarrow \CC$ is positive
  definite, there is a unique positive measure\/ $\mu$ on\/
  $\TT^{d}\simeq \RR^{d}/\ZZ^{d}$ such that
\[
   \eta\ts(k) \, =  \int_{\TT^{d}} \ee^{2 \pi \ii k u} \dd \mu (u)
\]
   holds for all\/ $k\in\ZZ^{d}$. When\/ $\eta\ts (0) = 1$, the
   measure\/ $\mu$ is a probability measure on\/ $\TT^{d}$.

   Moreover, one has the relation
\[
   \lim_{n\to\infty} \frac{1}{(2n+1)^{d}}
   \sum_{k \in \ZZ^{d}\cap C_{n}} \bigl| \eta\ts (k) \bigr|^{2}
   \, = \, \sum_{t\in \TT^{d}} \bigl( \mu (\{t\}) \bigr)^{2} ,
\]
where the last sum runs over at most countably many points.
\end{lemma}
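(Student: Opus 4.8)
The plan is to prove Wiener's lemma for $\ZZ^{d}$ in two parts, mirroring the classical one-dimensional argument. First I would establish the Herglotz--Bochner correspondence: given a positive definite $\eta\!:\ZZ^{d}\to\CC$, the finite measures $\mu^{}_{n}$ on $\TT^{d}$ with densities $\frac{1}{(2n+1)^{d}}\bigl|\sum_{k\in\ZZ^{d}\cap C_{n}} \eta(k)\ee^{-2\pi\ii k u}\bigr|$ --- or, more cleanly, the Fej\'er-type kernels $\sum_{k}\prod_{i}\bigl(1-\frac{|k_{i}|}{n+1}\bigr)\eta(k)\ee^{-2\pi\ii k u}$ --- are non-negative by positive definiteness, have uniformly bounded mass, and converge vaguely to a positive measure $\mu$ whose Fourier coefficients are exactly the $\eta(k)$; uniqueness follows because a measure on $\TT^{d}$ is determined by its Fourier coefficients (Stone--Weierstrass on the torus). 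Normalisation $\eta(0)=1$ gives $\mu(\TT^{d})=1$.

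Second, and this is the substantive half, I would prove the averaged-square formula. The idea is to interpret $\frac{1}{(2n+1)^{d}}\sum_{k\in\ZZ^{d}\cap C_{n}}|\eta(k)|^{2}$ via Fubini as a double integral against $\mu\times\bar{\mu}$. Concretely, write $\eta(k)=\int_{\TT^{d}}\ee^{2\pi\ii k u}\dd\mu(u)$ and $\overline{\eta(k)}=\int_{\TT^{d}}\ee^{-2\pi\ii k v}\dd\mu(v)$, so that
\[
   \frac{1}{(2n+1)^{d}}\sum_{k\in\ZZ^{d}\cap C_{n}}|\eta(k)|^{2}
   \, = \int_{\TT^{d}}\int_{\TT^{d}} D^{}_{n}(u-v)\dd\mu(u)\dd\mu(v),
\]
where $D^{}_{n}(w)=\frac{1}{(2n+1)^{d}}\sum_{k\in\ZZ^{d}\cap C_{n}}\ee^{2\pi\ii k w}=\prod_{i=1}^{d}\frac{1}{2n+1}\frac{\sin((2n+1)\pi w^{}_{i})}{\sin(\pi w^{}_{i})}$ is the normalised $d$-dimensional Dirichlet kernel. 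The key pointwise fact is that $D^{}_{n}(w)\to\one^{}_{\{w=0\}}(w)$ as $n\to\infty$, with $|D^{}_{n}(w)|\le 1$ uniformly. Applying dominated convergence to the product measure $\mu\times\mu$ (finite since $\mu(\TT^{d})<\infty$) yields
\[
   \lim_{n\to\infty}\frac{1}{(2n+1)^{d}}\sum_{k\in\ZZ^{d}\cap C_{n}}|\eta(k)|^{2}
   \, = \int_{\TT^{d}}\int_{\TT^{d}}\one^{}_{\{u=v\}}\dd\mu(u)\dd\mu(v)
   \, = \int_{\TT^{d}}\mu(\{v\})\dd\mu(v)
   \, = \sum_{t\in\TT^{d}}\bigl(\mu(\{t\})\bigr)^{2},
\]
the last equality because only the atoms of $\mu$ contribute, and there are at most countably many since $\mu$ is finite.

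The main obstacle is the justification of dominated convergence for the diagonal indicator: one must check that $D^{}_{n}(u-v)\to\one^{}_{\{u=v\}}$ holds $(\mu\times\mu)$-almost everywhere, not merely Lebesgue-a.e. This is fine because the limit is genuinely pointwise for every $w\neq 0$ (each factor $\frac{\sin((2n+1)\pi w^{}_{i})}{(2n+1)\sin(\pi w^{}_{i})}\to 0$ when $w^{}_{i}\notin\ZZ$, and equals $1$ when $w^{}_{i}\in\ZZ$, so the product is $1$ iff $w\in\ZZ^{d}$, i.e.\ $w\equiv 0$ on $\TT^{d}$), and the uniform bound $|D^{}_{n}|\le 1$ serves as the dominating function against the finite measure $\mu\times\mu$; no delicate kernel estimates or Ces\`aro smoothing are needed once one uses the Dirichlet rather than the Fej\'er kernel here. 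The only care required is the bookkeeping that $\mathrm{card}(\ZZ^{d}\cap C_{n})=(2n+1)^{d}$ matches the normalisation, and that the evaluation of $\int\mu(\{v\})\dd\mu(v)$ as $\sum_{t}\mu(\{t\})^{2}$ uses that the non-atomic part of $\mu$ contributes zero to the inner integrand $v\mapsto\mu(\{v\})$.
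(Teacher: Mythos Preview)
Your argument is correct and essentially identical to the paper's: both express the averaged square as $\int_{\TT^{d}\times\TT^{d}} g^{}_{n}(u-v)\dd\mu(u)\dd\mu(v)$ with the normalised Dirichlet kernel $g^{}_{n}=D^{}_{n}$, invoke the uniform bound $|D^{}_{n}|\le 1$ and pointwise convergence $D^{}_{n}\to\one^{}_{\{0\}}$, and then apply dominated convergence and Fubini to reach $\sum_{t}\mu(\{t\})^{2}$. The only difference is cosmetic: the paper simply cites the Herglotz--Bochner theorem from \cite{Rudin} for the first part, whereas you sketch a constructive proof via Fej\'er kernels and vague compactness (and the paper remarks that $\bar{\mu}=\mu$ for a positive measure, which you use implicitly when passing from $\mu\times\bar{\mu}$ to $\mu\times\mu$).
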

\begin{proof}
  Since $\ZZ^{d}$ and $\TT^{d}$ form a mutually dual pair of locally
  compact Abelian groups, the first claim is just the Herglotz-Bochner
  theorem for this situation, compare \cite{Rudin}, with
  $\mu (\TT^{d}) = \eta\ts (0)$.

  Now, define $g_{n} (u) = \frac{1}{(2n+1)^{d}} \sum_{k\in\ZZ^{d}\cap
    C_{n}} \ee^{2 \pi \ii k u}$, which is bounded by $1$. It is not
    difficult to see that $g_{n} (u) \xrightarrow{\, n\to\infty\,}
    1_{\{0\}} (u)$ holds for any $u\in\TT^{d} = [0,1)^{d}$. Using
    Fubini's theorem and noting that $\bar{\mu} = \mu$, one obtains
\[
   \begin{split}
   \frac{1}{(2n\! +\! 1)^{d}} &
   \sum_{k \in \ZZ^{d}\cap C_{n}} \!\! \bigl| \eta\ts (k) \bigr|^{2}
   \, = \,  \frac{1}{(2n\! + \!1)^{d}} \sum_{k \in \ZZ^{d}\cap C_{n}}
   \int_{\TT^{d}}\! \ee^{2\pi\ii ku} \dd\mu (u)
   \overline{\int_{\TT^{d}}\! \ee^{2\pi\ii kv} \dd\mu (v)} \\
   & \hspace*{-10mm} = \int_{\TT^{d} \times \TT^{d}} g_{n} (u-v) 
   \dd\mu (u) \dd \mu (v)
   \;\, \xrightarrow{\, n\to\infty\,}\, \int_{\TT^{d} \times \TT^{d}}
   1_{\{0\}} (u-v) \dd\mu (u) \dd \mu (v)
   \end{split}
\]
where the last step follows from dominated convergence; compare
\cite[Thm.~6.1.15]{Ped} for a formulation in sufficient generality.
The integral over the diagonal evaluates via Fubini's
theorem as
\[
   \int_{\TT^{d}} \int_{\TT^{d}}
   1_{\{0\}} (u-v) \dd\mu (u) \dd \mu (v)
   \, =  \int_{\TT^{d}} \mu (\{v\}) \dd \mu (v)
   \, =  \sum_{t\in\TT^{d}} \bigl( \mu (\{ t \}) \bigr)^{2} ,
\]
where the last sum (since $\mu$ is a positive measure) runs over all
points $t\in\TT^{d}$ with $\mu (\{t\}) > 0$, which are (at most)
countably many.
\end{proof}

\end{document}